\theoremstyle{plain}
    \newtheorem{thm}{Theorem}[section]
    \newtheorem{ppn}[thm]{Proposition}
    \newtheorem{lem}[thm]{Lemma}
    \newtheorem{cor}[thm]{Corollary}
\theoremstyle{definition}
    \newtheorem{dfn}[thm]{Definition}
\theoremstyle{remark}
    \newtheorem{rmk}[thm]{Remark}    
\numberwithin{equation}{section}
\def\C{\mathbb{C}}\def\Q{\mathbb{Q}}\def\Z{\mathbb{Z}}\def\N{\mathbb{N}}
\def\a{\alpha}\def\b{\beta}\def\g{\gamma}\def\d{\delta}\def\pd{\partial}\def\e{\varepsilon}
\def\x{\xi}\def\y{\eta}
\def\o{\omega}
\def\sD{\mathscr{D}}
\def\s{\sigma}\def\t{\tau}
\def\vp{\varphi}
\def\y{\eta}
\def\D{\Delta}
\def\FF#1#2#3{{}_2F_1\left({#1\atop #2};#3\right)}
\def\r{\rho}
\def\bD{\mathbf{D}}
\begin{document}

\title{A new approach to hypergeometric transformation formulas}
\author{Noriyuki Otsubo}
\email{otsubo@math.s.chiba-u.ac.jp}
\address{Department of Mathematics and Informatics, Chiba University, Inage, Chiba, 263-8522 Japan}

\begin{abstract}
We give a new method to prove in a uniform and easy way various transformation formulas for Gauss hypergeometric functions. 
The key is Jacobi's canonical form of the hypergeometric differential equation. 
Analogy for $q$-hypergeometric functions is also studied. 
\end{abstract}

\date{\today}
\subjclass[2010]{33C05, 33D15}
\keywords{Hypergeometric functions, 
basic hypergeometric functions, 
transformation formulas.}

\maketitle

\section{Introduction}

Recall that a Gauss hypergeometric function is defined by the power series
\begin{equation*}
\FF{a,b}{c}{x}=\sum_{n=0}^\infty \frac{(a)_n(b)_n}{(c)_n(1)_n}x^n, \quad (a)_n=\prod_{i=0}^{n-1}(a+i),
\end{equation*}
which converges on the open unit disk. Here the parameters $a$, $b$, $c$ are complex numbers and $-c\not\in\N$. 
We know many transformation formulas among such functions since Euler, Pfaff and Gauss, and some of them are quite new. Such formulas have various aspects and to find or prove them, different techniques have been used. 
For example, elliptic functions and computation using mathematical software played important roles. 

In this paper, we give a new method to prove such formulas in a uniform and easy way. 
Recall that the hypergeometric function satisfies a linear ordinary differential equation of order two, 
and hence is characterized by this equation together with the initial values.  
The key of our method is the following canonical form of the hypergeometric differential equation
\begin{equation*}
\left(\frac{d}{dx} x^c(1-x)^e \frac{d}{dx} -ab x^{c-1}(1-x)^{e-1}\right)y=0
\end{equation*}
where $e=1+a+b-c$ (Theorem \ref{p1}). 
As the author learned after writing the first manuscript of the present paper, this form was known by Jacobi \cite{jacobi} (see also \cite[\S 24]{poole}). 
Though it seems to have been scarcely used, at least unless $a+b=c=1$, it has several advantages to the standard ones (see \eqref{e0}, \eqref{e1}). 
It clarifies the symmetry under $x \leftrightarrow 1-x$ (then $c \leftrightarrow e$),  
and behaves nicely under the change of variables we are to consider. 
Above all, it enables us to compute the differential equation for $h(x)F(x)$ from that for $F(x)$ in a straight-forward way. 

Among the transformation formulas to be proved in this paper, of particular interest are the following ones, 
which have strong connection to number theory. 

\begin{thm}\label{thm1}On a neighborhood of $x=0$, 
\begin{align}
(1+x)^a \FF{\frac{a}{2},\frac{a-b+1}{2}}{\frac{b+1}{2}}{x^2}
&=\FF{\frac{a}{2},\frac{b}{2}}{b}{1-\left(\frac{1-x}{1+x}\right)^2}, \label{t2+}
\\
(1+2x)^a\FF{\frac{a}{3},\frac{a+1}{3}}{\frac{a+5}{6}}{x^3}
&=\FF{\frac{a}{3},\frac{a+1}{3}}{\frac{a+1}{2}}{1-\left(\frac{1-x}{1+2x}\right)^3}, \label{t3+}
\\
(1+3x)^{\frac{a}{2}} \FF{\frac{a}{4},\frac{a+2}{4}}{\frac{a+5}{6}}{x^2}
&=\FF{\frac{a}{4},\frac{a+2}{4}}{\frac{a+2}{3}}{1-\left(\frac{1-x}{1+3x}\right)^2}.\label{t4+}
\end{align}
\end{thm}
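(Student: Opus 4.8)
The plan is to exploit the canonical form of Theorem \ref{p1} on both sides of each identity. For a claimed formula $h(x)F(\phi(x)) = G(\psi(x))$, the strategy is: (1) write down the canonical differential equation satisfied by $G$ as a function of its argument, pull it back along the rational substitution $\psi$ (using that the canonical operator transforms nicely under change of variable, as emphasized in the introduction), obtaining a second-order ODE in $x$ for $G(\psi(x))$; (2) similarly, start from the canonical equation for $F$, pull back along $\phi(x) = x^k$ (a power map), and then use the key feature of Jacobi's form---that one can read off the equation for $h(x)\cdot(\text{solution})$ directly---to get the ODE in $x$ satisfied by $h(x)F(\phi(x))$; (3) check these two second-order ODEs coincide; (4) match one initial value at $x=0$ (both sides equal $1$ there, since each $\FF{\cdot,\cdot}{\cdot}{0}=1$ and the prefactors are $1$ at $x=0$) and one more datum (e.g. the derivative, or the second coefficient of the power series) to pin down the solution uniquely. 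Since a second-order linear ODE with a regular point has a two-dimensional solution space, agreement of the equation plus agreement of the $0$th and $1$st Taylor coefficients forces equality on a neighborhood of $x=0$.

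Concretely, for \eqref{t2+} the outer substitution is $\psi(x) = 1 - \left(\frac{1-x}{1+x}\right)^2$, which sends $x=0 \mapsto 0$; note $1-\psi(x) = \left(\frac{1-x}{1+x}\right)^2$, so writing $t = \psi(x)$ one has $\sqrt{1-t} = \frac{1-x}{1+x}$, a Möbius function of $x$, and $t$ itself is $\frac{4x}{(1+x)^2}$. The canonical equation for $\FF{a/2,\,b/2}{b}{t}$ has exponent data $c = b$, $e = 1 + \tfrac a2 + \tfrac b2 - b = 1 + \tfrac{a-b}{2}$ at $t=1$; after the substitution the branch points $t=0,1,\infty$ pull back to $x=0$, $x=\pm 1$, and $x = \infty$ or wherever $1+x=0$, and one tracks how $x^c(1-x)^e$ transforms. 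On the left, the inner map is $\phi(x)=x^2$ applied to $\FF{a/2,\,(a-b+1)/2}{(b+1)/2}{\cdot}$ and the prefactor is $h(x) = (1+x)^a$; here the power-substitution doubles exponents at $0$ while splitting the singularity at $\phi=1$ into $x = \pm 1$, and multiplying by $(1+x)^a$ shifts the local exponents at $x=-1$ by $a$ in exactly the way Jacobi's form makes transparent. The formulas \eqref{t3+}, \eqref{t4+} are handled identically with $\phi(x) = x^3$ (resp.\ $x^2$), outer map $1 - \left(\frac{1-x}{1+2x}\right)^3$ (resp.\ $1 - \left(\frac{1-x}{1+3x}\right)^2$), and prefactors $(1+2x)^a$, $(1+3x)^{a/2}$; the relevant algebraic miracle is that in each case the substituted argument, the inner power, and the Möbius factor are compatible so that the two second-order operators produced in steps (1) and (2) literally agree.

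The main obstacle I expect is step (3): verifying that the pulled-back operators coincide. Each pullback produces a second-order operator whose coefficients are rational functions in $x$ with several apparent poles (at $x=\pm 1$, at the zero of $1+2x$ or $1+3x$, possibly at $\infty$), and one must confirm that the singular points, the local exponents at each, and the global normalization all match between the two sides. This is where the choice of Jacobi's canonical form pays off: because the form $\bigl(\frac{d}{dx} x^c(1-x)^e \frac{d}{dx} - ab\, x^{c-1}(1-x)^{e-1}\bigr)$ is preserved in shape under both the power substitution and the Möbius-type substitution (only $c$, $e$, and the product $ab$ get transformed, in computable ways), the comparison reduces to checking that the triples $(c,e,ab)$ produced on the two sides---together with the change-of-variable Jacobian factors---agree, rather than expanding messy rational functions. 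Once the operators are matched, the initial-value bookkeeping in step (4) is routine: evaluate both sides and their first derivatives at $x=0$ using $\frac{d}{dx}\FF{a,b}{c}{\lambda(x)} = \frac{ab}{c}\lambda'(0)$ at $x=0$ together with the derivative of the prefactor, and check the two resulting scalars agree (they do, by direct substitution of the parameters).
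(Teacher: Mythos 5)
Your proposal follows essentially the same route as the paper: pull back Jacobi's canonical operator under the inner power map and the outer M\"obius-power substitution, use the conjugation identity for multiplication by $h(x)$ to match the two second-order operators, and conclude by comparing the value and first derivative at $x=0$. The only caveat is that the operator comparison is not quite just a matching of exponent triples $(c,e,ab)$ --- the prefactor contributes the extra term $(f_1h')'h$ to the zeroth-order coefficient, which must be computed explicitly (the paper does this via logarithmic derivatives) --- but this is a matter of carrying out the computation you already anticipate, not a gap in the method.
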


The quadratic formula \eqref{t2+} with two free parameters $a$ and $b$ is very classical, due to Gauss  \cite[p. 225, formula 101]{gauss}. 
The special case where $a=b=1$ reduces to the Landen transformation formula
$$(1+x)K(x)=K\left(\frac{2\sqrt{x}}{1+x}\right)$$
for the elliptic integral of the first kind
$$K(x):=\int_0^\frac{\pi}{2}\frac{d\theta}{\sqrt{1-x^2\cos^2\theta}}
=\frac{\pi}{2} \FF{\frac{1}{2},\frac{1}{2}}{1}{x^2}.$$
From this follows the formula 
$$\FF{\frac{1}{2},\frac{1}{2}}{1}{1-x^2}=\frac{1}{M(x)},$$
where $M(x)$ denotes the arithmetic-geometric mean of $1$ and $x$ ($0<x\le 1$). 

The cubic formula \eqref{t3+} for $a=1$ was first found by Ramanujan \cite[second notebook, p. 258]{ramanujan} in his study of elliptic functions to alternative bases. It was rediscovered and proved by Borwein-Borwein \cite[p. 694]{bb} in their study of a cubic analogue of the arithmetic-geometric mean. 
The general case of \eqref{t3+} is due to Berndt-Bhargava-Garvan \cite[Theorem 2.3]{b-b-g}.  

The quadratic formula \eqref{t4+} for $a=1$ was also found by Ramanujan \cite[p. 260]{ramanujan}. 
This is also related with a generalized arithmetic-geometric mean, and a proof was given implicitly by Borwein-Borwein \cite[Theorem 2.6]{bb} and explicitly by Berndt-Bhargava-Garvan \cite[Theorem 9.4]{b-b-g}. 
The general case of \eqref{t4+} was found recently by Matsumoto-Ohara \cite[Corollary 3]{matsumoto-ohara} (see Section \ref{ss-multi}). 

One may expect that our method is useful not only for proving formulas, but also for finding new ones. 
In fact, the author found \eqref{t4+} independently before knowing \cite{matsumoto-ohara}. 
For other transformation formulas which were found more recently, see for example Vid\=unas \cite{vidunas}. 

This paper is constructed as follows. 
In Section 2, we derive the canonical form of the hypergeometric equation and explain our general method for transformation formulas. 
In Section 3, we give a short proof of Theorem \ref{thm1}, and discuss transformation formulas 
for multivariable hypergeometric functions of Appell and Lauricella.  
In Sections 4--6, we give proofs of other quadratic, cubic and quartic formulas and discuss their relations with the previous formulas. 
In the last Section 7, we study the analogy for $q$-hypergeometric functions (basic hypergeometric functions) ${}_2\phi_1$.  
We give a canonical form of the $q$-hypergeometric difference equation (Theorem \ref{thm5}), and use it to give a new proof of Heine's transformation formula (Theorem \ref{thm6}). 

\section{Generalities}

\subsection{Hypergeometric differential equation}

Let us write differential operators as
$$\pd=\pd_x=\frac{d}{dx}, \quad D=D_x=x\pd.$$
One sees easily from $Dx^n=nx^n$ that 
$\FF{a,b}{c}{x}$ satisfies the differential equation
\begin{equation}\label{e0}
\left((D+a)(D+b)-x^{-1}D(D+c-1)\right)y=0.\end{equation}
Since $D^2=x^2 \pd^2+x\pd$, \eqref{e0} is equivalent to 
\begin{equation}\label{e1}
\left(x(1-x)\pd^2+(c-(1+a+b)x)\pd-ab\right)y=0. 
\end{equation}
Further, it can be written as
\begin{equation}\label{e2}
\left(\pd^2+\left(\frac{c}{x}-\frac{e}{1-x}\right)\pd-\frac{ab}{x(1-x)}\right)y=0, 
\end{equation}
where we define $e$ by 
$$1+a+b=c+e.$$
Then it is obvious that $\FF{a,b}{e}{1-x}$ is another solution ($-e\not\in\N$ assumed). 

\begin{rmk}
One cannot expect such a symmetry for the differential equation satisfied by 
a generalized hypergeometric function ${}_{p}F_{p-1}(x)$ in general for $p>2$. 
This can be seen from the asymmetry of the Riemann scheme.  
\end{rmk}

The key observation of this paper is the following. 
\begin{thm}\label{p1}
Put $\vp(x)=x^c(1-x)^e$ where $e=1+a+b-c$.  
Then  $\FF{a,b}{c}{x}$ {\rm(}resp. $\FF{a,b}{e}{1-x}${\rm )} is the unique solution of the differential equation 
\begin{equation}\label{e3}
\left(\pd \vp(x) \pd -ab\frac{\vp(x)}{x(1-x)}\right)y=0
\end{equation}
such that 
$$y(0)=1, \ y'(0)=\frac{ab}{c} \quad \left(\text{resp. } y(1)=1, \ y'(1)=-\frac{ab}{e}\right).$$
\end{thm}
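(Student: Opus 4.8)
The plan is to show that equation \eqref{e3} is equivalent to equation \eqref{e2}, after which the identification of solutions follows from what was already established. First I would expand the second-order operator in \eqref{e3} using the Leibniz rule: writing $\pd\vp\pd y = \vp\pd^2 y + \vp'\pd y$, equation \eqref{e3} becomes
\begin{equation*}
\vp(x)\,y'' + \vp'(x)\,y' - ab\frac{\vp(x)}{x(1-x)}\,y = 0.
\end{equation*}
Dividing through by $\vp(x)$ (legitimate on a punctured neighborhood, and the resulting equation extends by continuity of its coefficients as meromorphic functions), the coefficient of $y'$ becomes $\vp'(x)/\vp(x) = (\log\vp)'(x)$. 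Since $\vp(x) = x^c(1-x)^e$, we get $(\log\vp)'(x) = \dfrac{c}{x} - \dfrac{e}{1-x}$, and the zeroth-order coefficient becomes $-\dfrac{ab}{x(1-x)}$. This is precisely \eqref{e2}. Hence \eqref{e3} and \eqref{e2}, and therefore \eqref{e1} and \eqref{e0}, all have the same solution space near $x=0$ (and near $x=1$).

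Next I would invoke the classical fact, recalled just before the theorem, that $\FF{a,b}{c}{x}$ solves \eqref{e0} (equivalently \eqref{e1}, \eqref{e2}), hence solves \eqref{e3}; and that $\FF{a,b}{e}{1-x}$ solves \eqref{e2} as well, by the manifest $x\leftrightarrow 1-x$, $c\leftrightarrow e$ symmetry of \eqref{e2}, hence also solves \eqref{e3}. For the uniqueness and initial-value assertions, I would note that \eqref{e1} has a regular singular point at $x=0$ with indicial exponents $0$ and $1-c$; when $-c\notin\N$ (so that $0$ is not a non-positive-integer-apart exponent causing logarithmic tangling in the holomorphic branch), there is, up to scalar, a unique solution holomorphic at $x=0$, and it is determined by its value $y(0)$. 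Normalizing $y(0)=1$ picks out $\FF{a,b}{c}{x}$; reading off the coefficient of $x^1$ in the power series $\sum (a)_n(b)_n/((c)_n(1)_n)\,x^n$ gives $y'(0) = (a)_1(b)_1/((c)_1(1)_1) = ab/c$. The statement at $x=1$ follows by applying the same reasoning after the substitution $x\mapsto 1-x$, which swaps $c$ and $e$ and turns $\pd$ into $-\pd$, so that $y'(1) = -ab/e$.

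The only real subtlety — and the step I would be most careful about — is the uniqueness claim, i.e. that \eqref{e3} has a \emph{unique} holomorphic solution with the prescribed $y(0)$ and $y'(0)$. Once one knows $0$ and $1-c$ are the indicial exponents, a holomorphic solution must correspond to the exponent $0$ (assuming $1-c\notin\Z_{\le 0}$, equivalently $c\notin\N$; if $c\notin\Z$ there is no issue at all, and the generic case is what matters), and Frobenius theory then gives existence and uniqueness of such a solution up to scaling, with the scaling pinned down by $y(0)$. One should remark that specifying $y'(0)$ as well is not an over-determination: for a solution with exponent $0$, the recursion forces $y'(0) = (ab/c)\,y(0)$ automatically, so the two conditions together are consistent and single out exactly $\FF{a,b}{c}{x}$. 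I would phrase the theorem's hypotheses ($-c\notin\N$ for the first normalization, $-e\notin\N$ for the second) as exactly what is needed to make this Frobenius argument go through, and otherwise the proof is the short computation above plus a citation to the standard theory of regular singular points.
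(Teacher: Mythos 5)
Your proposal is correct and follows essentially the same route as the paper: both reduce \eqref{e3} to \eqref{e2} by computing that the logarithmic derivative of $\vp$ contributes $\frac{c}{x}-\frac{e}{1-x}$ (the paper phrases this as the operator identity $\pd\vp(x)=\vp(x)\bigl(\pd+\frac{c}{x}-\frac{e}{1-x}\bigr)$, you via Leibniz and division by $\vp$), then read off the initial values from the power series. The only difference is that you spell out the Frobenius/indicial-exponent justification for uniqueness, which the paper declares ``evident'' and relegates to the remark following the theorem; that extra care is harmless and arguably an improvement.
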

\begin{proof}
For any function $f(x)$ regarded as a multiplication operator, the identity of operators
$$\pd f(x) = f(x) \pd + f'(x)$$ holds. 
Therefore, 
\begin{align*}
\pd \vp(x)
=&(x^c \pd +cx^{c-1})(1-x)^e
\\=& x^c((1-x)^e\pd-e(1-x)^{e-1})+cx^{c-1}(1-x)^e
\\=& \vp(x)\left(\pd+\frac{c}{x}-\frac{e}{1-x}\right).
\end{align*}
Hence follows the equivalence of \eqref{e2} and \eqref{e3}. 
The initial values are immediate from the definition and the uniqueness is evident. 
\end{proof}

\begin{rmk}In fact, the differential equation is regular singular at $x=0$, 
and $\FF{a,b}{c,}{x}$ (resp. $\FF{a,b}{e}{1-x}$) is the unique holomorphic solution with $y(0)=1$ (resp. $y(1)=1$). 
\end{rmk}

\subsection{Transformation}\label{ss-transformation}
We consider differential operators of the form 
$$\sD=\pd f(x) \pd - g(x).$$
If $F(x)$ is a solution of the differential equation $\sD y=0$, we say for brevity that 
$F(x)$ is a {\em solution of $\sD$}, and that $\sD$ is a {\em differential operator for $F(x)$}. 
This type of differential equation is stable under a change of variables. 

\begin{lem}\label{l2.2}
Let $F(x)$ be a solution of $\pd f(x) \pd - g(x)$ and 
$z(x)$ be a non-constant holomorphic function. Then 
$F(z(x))$ is a solution of
\begin{equation*}
\pd f(z(x))z'(x)^{-1}\pd -z'(x)g(z(x)).
\end{equation*}
\end{lem}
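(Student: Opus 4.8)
The statement is purely a chain-rule computation, so the plan is to transport the differential equation $\pd f(x)\pd y=g(x) y$ along the substitution $x\mapsto z(x)$ and rewrite everything in terms of $\pd=\pd_x$. First I would set $G(x)=F(z(x))$ and compute $\pd G=z'(x)\,F'(z(x))$, i.e.\ $F'(z(x))=z'(x)^{-1}\pd G$ as an identity of functions (valid where $z'(x)\neq 0$, which holds on a dense open set since $z$ is non-constant holomorphic). The hypothesis $\pd f(x)\pd F=g(x)F$ evaluated at the point $z(x)$ reads $\bigl(\pd_x\, f(x)\, F'(x)\bigr)\big|_{x=z(x)} = g(z(x))F(z(x))$; the left-hand side is $\frac{d}{dt}\bigl(f(t)F'(t)\bigr)\big|_{t=z(x)}$, and by the chain rule this equals $z'(x)^{-1}\pd_x\bigl(f(z(x))F'(z(x))\bigr)$. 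Substituting $F'(z(x))=z'(x)^{-1}\pd G$ gives $z'(x)^{-1}\pd\bigl(f(z(x))z'(x)^{-1}\pd G\bigr)=g(z(x))G$, and multiplying through by $z'(x)$ yields exactly $\pd\, f(z(x))z'(x)^{-1}\pd\, G = z'(x)g(z(x))\,G$, which is the claim.

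Concretely I would organize the write-up around the operator identity used already in the proof of Theorem~\ref{p1}, namely $\pd h(x)=h(x)\pd+h'(x)$, together with the substitution rule $(\pd_t u)\circ z = z'(x)^{-1}\pd_x(u\circ z)$ for any function $u$. Applying the latter to $u=f\cdot F'$ and noting $F'\circ z = z'^{-1}\cdot(F'\circ z)$ only after re-expressing $F'\circ z$ via $\pd(F\circ z)$ is the crux; everything else is bookkeeping. It is worth remarking that the second-order operators $\pd f\pd - g$ form exactly the class stable under this operation, which is why the canonical form \eqref{e3} is so convenient — a point the lemma is meant to make.

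The only real subtlety, and the step I would be most careful about, is the domain of validity: $z'(x)^{-1}$ appears in the transformed operator and in the intermediate identities, so the computation is literally valid only on the open set where $z'(x)\neq 0$. Since $z$ is holomorphic and non-constant its zero set is discrete, so the transformed equation holds on a dense open set and hence, both sides being holomorphic (or meromorphic) functions, everywhere the expression makes sense; in applications one simply checks that $f(z(x))z'(x)^{-1}$ extends appropriately. I would state this as a one-line remark rather than belabor it. No genuine obstacle arises — the lemma is a formal consequence of the chain rule, and the proof should be three or four lines.
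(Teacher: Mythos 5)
Your proof is correct and follows exactly the paper's route: the paper's entire proof is the one-line observation that $\pd_z=z'(x)^{-1}\pd$, and your write-up is simply a careful expansion of that chain-rule substitution (substitute $\pd_z\mapsto z'^{-1}\pd_x$ in $\pd_z f(z)\pd_z-g(z)$ and multiply the equation through by $z'(x)$). The extra remark about the discrete zero set of $z'$ is a reasonable precaution but not needed in the paper's treatment.
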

\begin{proof}Immediate from $\pd_z=z'(x)^{-1}\pd$. 
\end{proof}

Consider the differential operator as in \eqref{e3}
$$\sD=\pd\vp(x)\pd-ab\frac{\vp(x)}{x(1-x)}.$$
The following examples of $z(x)$ are important. 
First, for a positive integer $s$, let 
$$z(x)=x^s.$$  
Then $\sD$ becomes by Lemma \ref{l2.2}
\begin{equation}\label{e5}
\pd x^{sc-s+1}(1-x^s)^e \pd -s^2 ab x^{sc-1}(1-x^s)^{e-1}. 
\end{equation}

Secondly, for a positive integer $r$, let 
$$z(x)=\frac{1-x}{1+(r-1)x}.$$
Note that the map $x \mapsto z$ is an involution since 
$$(1+(r-1)x)(1+(r-1)z)=r. $$
We have
$$1-z=\frac{rx}{1+(r-1)x}, \quad \pd_z=-\frac{1}{r}(1+(r-1)x)^2\pd,$$
so $\sD$ becomes by Lemma \ref{l2.2}
\begin{equation}\label{e6}
\pd x^e(1-x)^c\r(x)^{-c-e+2} \pd -rab x^{e-1}(1-x)^{c-1}\r(x)^{-c-e}, 
\end{equation}
where we put
$$\r(x)=1+(r-1)x.$$ 

Finally, consider
$$z(x)=\left(\frac{1-x}{1+(r-1)x}\right)^s=\left(\frac{1-x}{\r(x)}\right)^s.$$
This is the composition of the two substitutions as above. 
Then,  
\begin{equation}\label{e7}
\pd_z=-\frac{1}{rs}(1-x)^{-s+1}\r(x)^{s+1}\pd.
\end{equation}
In general, the resulting differential operator is not so simple.  
For $(r,s)=(2,2)$, $(3,3)$ and $(4,2)$, however, we have respectively
\begin{align*}
1-z&=\frac{4x}{(1+x)^2}, \\
1-z& =\frac{9x(1+x+x^2)}{(1+2x)^3}=\frac{9x(1-x^3)}{(1-x)(1+2x)^3}, 
\\1-z&= \frac{8x(1+x)}{(1+3x)^2}=\frac{8x(1-x^2)}{(1-x)(1+3x)^2}.
\end{align*}
Hence the term $z^c(1-z)^e$ is a product of powers of $x$, $1-x$, $1-x^s$ and $\r(x)$.  
This explains why, in each formula of Theorem \ref{thm1}, the differential equation for the right-hand side is of a manageable form. 

\subsection{Comparison}\label{ss-comparison}
The formulas we prove are of the form
$$h(x)F_2(x)=F_1(x),$$
where $F_i(x)$ is a solution of a differential operator $\sD_i$ of order $2$ of the form 
$$\sD_i=\pd f_i(x)\pd - g_i(x).$$
Suppose that $F_1(x)$, $F_2(x)$  and $h(x)$ are holomorphic at $x=x_0$, $h(x_0)\ne 0$ 
and 
$$(hF_2)(x_0)=F_1(x_0), \quad (hF_2)'(x_0)= F_1'(x_0).$$ 
Then the equality $h(x)F_2(x)=F_1(x)$ holds on a neighborhood of $x_0$ if and only if $h(x)F_2(x)$ is also a solution of $\sD_1$. 

\begin{lem}
The identity of operators $h(x)\sD_1h(x)=\sD_2$ holds if and only if 
\begin{equation}\label{e8}
\begin{split}
f_2(x)&=f_1(x)h(x)^2, 
\\ g_2(x)& =g_1(x)h(x)^2-(f_1(x)h'(x))'h(x). 
\end{split}\end{equation}
\end{lem}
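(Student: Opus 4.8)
The plan is to compute the operator $h(x)\sD_1 h(x)$ explicitly as a second-order differential operator and match coefficients against $\sD_2=\pd f_2(x)\pd - g_2(x)$. Since $\sD_1=\pd f_1(x)\pd - g_1(x)$, I first expand $h\sD_1 h = h\pd f_1\pd h - h g_1 h$, where all of $h$, $f_1$, $g_1$ act as multiplication operators. The second term is immediately $g_1 h^2$ as a multiplication operator, contributing $-g_1(x)h(x)^2$, which already matches the claimed formula for $g_2$ up to the correction term. The real work is the first term $h\pd f_1\pd h$.

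To expand $h\pd f_1\pd h$, I would use repeatedly the basic identity from the proof of Theorem \ref{p1}, namely $\pd f = f\pd + f'$ for any multiplication operator $f$. Applying this to move $\pd$ past $h$ on the right, $\pd h = h\pd + h'$, so $f_1\pd h = f_1 h\pd + f_1 h'$. Then $h\pd(f_1\pd h) = h\pd(f_1 h\pd + f_1 h') = h\pd f_1 h\pd + h\pd f_1 h'$. For the first piece, $\pd(f_1 h) = f_1 h\pd + (f_1 h)'$, giving $h f_1 h\pd^2 + h(f_1h)'\pd = f_1 h^2\pd^2 + h(f_1h)'\pd$. For the second piece, $h\pd(f_1 h') = h(f_1 h'\pd + (f_1 h')') = h f_1 h'\pd + h(f_1 h')'$. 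Collecting, the $\pd^2$ coefficient is $f_1 h^2$, the $\pd^1$ coefficient is $h(f_1 h)' + h f_1 h' = h(f_1' h + f_1 h') + h f_1 h' = h f_1' h + 2 h f_1 h'$, and the $\pd^0$ part is $h(f_1 h')'$. The claim is that $h\sD_1 h$ is again of the special self-adjoint form $\pd f_2\pd - g_2$; since $\pd f_2\pd = f_2\pd^2 + f_2'\pd$, I must check that the $\pd^2$ and $\pd^1$ coefficients above are consistent with a single choice $f_2 = f_1 h^2$. That requires $f_2' = (f_1 h^2)' = f_1' h^2 + 2 f_1 h h' = h(f_1' h + 2 f_1 h')$, which is exactly the $\pd^1$ coefficient computed — so the first-order form is automatic, with no constraint beyond $f_2 = f_1 h^2$.

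Finally I assemble: $h\sD_1 h = \pd (f_1 h^2)\pd + h(f_1 h')' - g_1 h^2$, hence $h\sD_1 h = \sD_2$ iff $f_2 = f_1 h^2$ and $-g_2 = h(f_1 h')' - g_1 h^2$, i.e. $g_2 = g_1 h^2 - (f_1 h')' h$, which is \eqref{e8}. The "if and only if" is then clear because two differential operators of order $\le 2$ are equal precisely when all three coefficient functions agree, and we have shown the $\pd^2$ and $\pd^1$ coefficients of $h\sD_1 h$ already force $f_2 = f_1 h^2$ while agreeing automatically in the first-order term.

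I expect the main (really the only) obstacle to be bookkeeping: keeping straight which symbols are multiplication operators versus the operator $\pd$, and applying $\pd f = f\pd + f'$ in the right order without sign or product-rule slips. There is no conceptual difficulty — it is a direct operator computation — so the proof will be short, essentially the display above followed by the observation that equality of order-$\le 2$ operators is equality of coefficients.
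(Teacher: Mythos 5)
Your proof is correct and follows essentially the same route as the paper: a direct operator computation based on the commutator identity $\pd f = f\pd + f'$, arriving at $h\sD_1 h = \pd f_1h^2\pd + (f_1h')'h - g_1h^2$ and then matching coefficients. The only cosmetic difference is that the paper keeps the computation in the form $(\pd h - h')f_1(h\pd + h')$ and never expands into the monomials $\pd^2,\pd,1$, whereas you expand fully and then verify (correctly) that the first-order coefficient is automatically consistent with $f_2=f_1h^2$.
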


\begin{proof}
Using $\pd h-h\pd=h'$, we have an equality of differential operators 
\begin{align*}
& h\pd f_1 \pd h=(\pd h-h') f_1 (h\pd+h')=\pd f_1h^2\pd+\pd f_1 h' h - f_1h'h\pd - f_1h'^2\\
&=\pd f_1 h^2 \pd +(f_1 h'h)'-f_1h'^2= \pd f_1h^2 \pd + (f_1h')'h.  
\end{align*}
Hence 
$h\sD_1h= \pd f_1h^2 \pd + (f_1h')'h- g_1h^2$, 
and the lemma follows. 
\end{proof}

If the condition \eqref{e8} holds, then 
$$\sD_1h(x)F_2(x)=h(x)^{-1}\sD_2 F_2(x)=0,$$ 
hence $h(x)F_2(x)$ is a solution of $\sD_1$ near $x_0$. 
Conversely, if one seeks a transformation formula between $F_1(x)$ and $F_2(x)$, one is led to find a function $h(x)$ satisfying \eqref{e8}. 

\subsection{Linear Transformations}

As easy examples of our method, let us prove the following formulas, 
respectively due to Euler and Pfaff. 
Later, we prove a $q$-analogue of the former in a similar manner (see Theorem \ref{thm6}). 

\begin{thm}On a neighborhood of $x=0$, 
\begin{align}
(1-x)^{a+b-c}\FF{a,b}{c}{x}&=\FF{c-a,c-b}{c}{x},\label{tle}\\
(1-x)^{a}\FF{a,b}{c}{x}&=\FF{a,c-b}{c}{\frac{x}{x-1}}.\label{tlp}
\end{align}
\end{thm}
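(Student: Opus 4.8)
The plan is to prove each identity by the comparison method set up in \S\ref{ss-comparison}: identify the two hypergeometric functions as the canonical solutions of differential operators $\sD_1,\sD_2$ of the form $\pd f(x)\pd - g(x)$ via Theorem \ref{p1}, write down the prefactor $h(x)$ appearing on the left, and verify the algebraic conditions \eqref{e8} together with the matching of initial values at $x_0=0$. For \eqref{tle}, with $F_1(x)=\FF{c-a,c-b}{c}{x}$ and $F_2(x)=\FF{a,b}{c}{x}$, both have the same lower parameter $c$; by Theorem \ref{p1} the operator for $F_2$ is $\sD_2=\pd\,x^c(1-x)^e\pd - ab\,x^{c-1}(1-x)^{e-1}$ with $e=1+a+b-c$, while the operator for $F_1$ is $\sD_1=\pd\,x^c(1-x)^{e'}\pd - (c-a)(c-b)\,x^{c-1}(1-x)^{e'-1}$ where $e'=1+(c-a)+(c-b)-c = c-a-b+1 = 2-e$. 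So $f_1(x)=x^c(1-x)^{2-e}$, $f_2(x)=x^c(1-x)^e$, and with $h(x)=(1-x)^{a+b-c}=(1-x)^{e-1}$ we indeed get $f_1(x)h(x)^2 = x^c(1-x)^{2-e}(1-x)^{2e-2} = x^c(1-x)^e = f_2(x)$, the first line of \eqref{e8}.

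The remaining task for \eqref{tle} is to check the second line of \eqref{e8}, $g_2 = g_1h^2 - (f_1h')'h$, which is a one-line computation of $(f_1h')'$ with $f_1h' = (e-1)x^c(1-x)^{2e-3}$; differentiating and multiplying by $h=(1-x)^{e-1}$ should produce exactly the difference $ab\,x^{c-1}(1-x)^{e-1} - (c-a)(c-b)x^{c-1}(1-x)^{2e-3}(1-x)^{e-1}$ after using the identity $(c-a)(c-b) - ab = c(c-a-b) = -c(e-1)$. Then I note that $h(0)=1\ne 0$, $(hF_2)(0)=F_1(0)=1$, and $(hF_2)'(0) = -(a+b-c) + ab/c = (c-a)(c-b)/c = F_1'(0)$ by the same identity, so the hypotheses of the comparison in \S\ref{ss-comparison} are met and \eqref{tle} holds near $0$.

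For \eqref{tlp}, I take $F_2(x)=\FF{a,b}{c}{x}$, $h(x)=(1-x)^a$, and use the substitution $z(x)=x/(x-1)$, noting $z(0)=0$, $1-z = 1/(1-x)$, and $z'(x) = -1/(x-1)^2 = -(1-x)^{-2}$, so $\pd_z = -(1-x)^2\pd$. Lemma \ref{l2.2} applied to the operator $\sD$ for $F(z)=\FF{a,c-b}{c}{z}$, namely $\pd\,z^c(1-z)^{\tilde e}\pd - a(c-b)\,z^{c-1}(1-z)^{\tilde e-1}$ with $\tilde e = 1+a+(c-b)-c = a-b+1$, produces a differential operator $\sD_1$ for $F_1(x)=\FF{a,c-b}{c}{x/(x-1)}$; substituting $z=x/(x-1)$, $1-z=1/(1-x)$, $z^{c-1}(1-z) = x^{c-1}(x-1)^{-c}$ etc. and clearing the powers of $(1-x)$, one reads off $f_1(x)$ and $g_1(x)$ as monomials in $x$ and $1-x$. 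Then I verify \eqref{e8} with $h(x)=(1-x)^a$ — again a short computation of $(f_1h')'$ — and check $(hF_2)(0)=1=F_1(0)$, $(hF_2)'(0) = -a + ab/c = a(b-c)/c$, which should equal $F_1'(0) = \frac{a(c-b)}{c}\cdot z'(0) = \frac{a(c-b)}{c}\cdot(-1) = a(b-c)/c$.

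The main obstacle is purely bookkeeping: tracking the fractional exponents of $x$, $1-x$ (and, for \eqref{tlp}, the sign coming from $z'<0$) through Lemma \ref{l2.2} and the first line of \eqref{e8} without error, and making sure the algebraic identity $(c-a)(c-b)-ab = -c(e-1)$ is invoked in exactly the right places when matching the $g$'s and the first derivatives. There is no conceptual difficulty — once $f_1h^2 = f_2$ is confirmed, Theorem \ref{p1} and the comparison lemma reduce everything to two elementary derivative checks and two initial-value checks, so the proof will be genuinely short.
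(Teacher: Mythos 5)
Your proposal follows exactly the paper's route: both identities are proved by applying Theorem \ref{p1} (and, for \eqref{tlp}, Lemma \ref{l2.2} with $z=x/(x-1)$), taking $h=(1-x)^{a+b-c}$ resp.\ $h=(1-x)^a$, verifying \eqref{e8}, and matching values and first derivatives at $x=0$; the key identity $(c-a)(c-b)-ab=c(c-a-b)$ is the same one the paper uses. One small slip to fix in \eqref{tle}: since $f_1=x^c(1-x)^{2-e}$ and $h'=-(e-1)(1-x)^{e-2}$, the powers of $1-x$ cancel and $f_1h'=-(e-1)x^c$ (not $(e-1)x^c(1-x)^{2e-3}$), whence $(f_1h')'h=-c(e-1)x^{c-1}(1-x)^{e-1}=g_1h^2-g_2$ as required.
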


\begin{proof}
\eqref{tle}. Put $h=(1-x)^{a+b-c}$, $F_2=\FF{a,b}{c}{x}$ and $F_1=\FF{c-a,c-b}{c}{x}$. 
Using the notations of Section \ref{ss-comparison}, we have by Theorem \ref{p1} 
\begin{align*}
\sD_1&=\pd x^c(1-x)^{c-a-b+1} \pd - (c-a)(c-b)x^{c-1}(1-x)^{c-a-b},
\\ \sD_2&=\pd x^c(1-x)^{a+b-c+1} \pd - ab x^{c-1}(1-x)^{a+b-c}.
\end{align*}
The first equality of \eqref{e8} is obvious and for the second, 
\begin{align*}
&(f_1h')'h-g_1h^2
\\=&(a+b-c)cx^{c-1}(1-x)^{a+b-c}-(c-a)(c-b)x^{c-1}(1-x)^{a+b-c}
\\=&abx^{c-1}(1-x)^{a+b-c}=g_2.
\end{align*}
Since $F_1(0)=(hF_2)(0)=1$ and $F_1'(0)=(hF_2)'(0)=\frac{(c-a)(c-b)}{c}$, we have $hF_2=F_1$. 

\eqref{tlp}.  
Put $h=(1-x)^a$, $F_2=\FF{a,b}{c}{x}$ and $F_1=\FF{a,c-b}{c}{\frac{x}{x-1}}$. 
Then, by Theorem \ref{p1} and Lemma \ref{l2.2}, 
$$\sD_1=\pd x^c(1-x)^{-a+b-c+1}\pd + a(c-b)x^{c-1}(1-x)^{-a+b-c-1},$$
and $\sD_2$ is the same as \eqref{tle}.
The first equality of \eqref{e8} is obvious. For the second, 
since $f_1h'=-ax^c(1-x)^{b-c}$, we have using the logarithmic derivatives 
$$(f_1h')'h=f_1h'\left(\frac{c}{x}-\frac{b-c}{1-x}\right)h=
-ax^{c-1}(1-x)^{a+b-c-1}(c-bx).$$
Hence \eqref{e8} follows. The comparison of the initial values is easy, and \eqref{tlp} follows.  
\end{proof}

\section{Proof of Theorem \ref{thm1}}
Here we give direct proofs of the formulas \eqref{t2+}, \eqref{t3+} and \eqref{t4+}.  
Alternative proofs are given respectively in Section \ref{s-quadratic}, Section \ref{s-cubic} and Section \ref{s-quadratic}.  

\subsection{Proof of  \eqref{t2+}}
Let $F_1$ be the right-hand side and $F_2$ be the ${}_2F_1(x^2)$ in the left-hand side. 
Here, with the notations of Section \ref{ss-transformation}, $r=s=2$, $\r=1+x$, and 
$$z=\frac{(1-x)^2}{\r^2}, \quad 1-z=\frac{4x}{\r^2}, \quad \pd_z=-\frac{1}{4}\frac{\r^3}{1-x}\pd.$$
By Theorem \ref{p1} and Lemma \ref{l2.2}, we have with the notations of Section \ref{ss-comparison}
\begin{align*}
\sD_1&=\pd x^b(1-x)^{a-b+1}\r^{-a-b+1}\pd-abx^{b-1}(1-x)^{a-b+1}\r^{-a-b-1}
\\&= \pd x^b(1-x^2)^{a-b+1}\r^{-2a}\pd-abx^{b-1}(1-x^2)^{a-b+1}\r^{-2a-2}.
\end{align*}
On the other hand, we have by \eqref{e5}
$$\sD_2=\pd x^b(1-x^2)^{a-b+1}\pd-a(a-b+1)x^b(1-x^2)^{a-b}. $$
Letting $h=\r^a$, we have $f_1h^2=f_2$. Since 
$f_1h'=ax^b(1-x^2)^{a-b+1}\r^{-a-1}$, 
we have 
\begin{align*}(f_1h')'h
&=f_1h'\left(\frac{b}{x}-\frac{2(a-b+1)x}{1-x^2}-\frac{a+1}{1+x}\right)h
\\&=ax^{b-1}(1-x^2)^{a-b}\r^{-1}(b-(a+1)x-(a-b+1)x^2).
\end{align*}
Hence the condition \eqref{e8} is verified.  
Since $F_1(0)=(hF_2)(0)=1$ and $F_1'(0)=(hF_2)'(0)=a$, we obtain $hF_2=F_1$. 
\qed

\begin{rmk}
The original proof of Gauss also compares the differential equations. 
In Erd\'elyi et. al. \cite[p. 111, (5)]{erdelyi}, another proof is suggested but not explicitly. 
\end{rmk}

\subsection{Proof of \eqref{t3+}}
Let $F_1$ be the right-hand side and $F_2$ be the ${}_2F_1(x^3)$ in the left-hand side. 
Here, $r=s=3$, $\r=1+2x$, and 
$$z=\frac{(1-x)^3}{\r^3}, \quad 1-z=\frac{9x(1-x^3)}{(1-x)\r^3}, \quad \pd_z=-\frac{1}{9}\frac{\r^4}{(1-x)^2}\pd.$$
By Theorem \ref{p1} and Lemma \ref{l2.2}, we have
\begin{align*}
\sD_1&=\pd x^\frac{a+1}{2}(1-x^3)^\frac{a+1}{2}\r^{-2a}\pd-a(a+1)x^\frac{a-1}{2}(1-x)^2(1-x^3)^\frac{a-1}{2}\r^{-2a-2},
\\
\sD_2&=\pd x^\frac{a+1}{2}(1-x^3)^\frac{a+1}{2}\pd-a(a+1)x^\frac{a+3}{2}(1-x^3)^\frac{a-1}{2}. 
\end{align*}
Letting $h=\r^a$, we have
$$(f_1h')'h=a(a+1)x^\frac{a-1}{2}(1-x^3)^\frac{a-1}{2}\r^{-2}(1-2x-4x^3-4x^4).$$
One easily verifies the condition \eqref{e7} and the coincidence of the initial values. Hence we obtain $hF_2=F_1$. \qed

\begin{rmk}
See Section \ref{ss-multi} for another proof. 
When $a=1$, other proofs are given by Borwein-Borwein-Garvan \cite[Corollary 2.4]{bbg2}, Chan \cite[Sections 5 and 6]{chan}, Cooper \cite[Theorem 5.3]{cooper} and Maier \cite[Corollary 6.2]{maier}.  
\end{rmk}

\subsection{Proof of  \eqref{t4+}}
Let $F_1$ be the right-hand side and $F_2$ be the ${}_2F_1(x^2)$ in the left-hand side. 
Here, $r=4$, $s=2$, $\r=1+3x$ and 
$$z=\frac{(1-x)^2}{\r^2}, \quad 1-z=\frac{8x(1-x^2)}{(1-x)\r^2}, \quad \pd_z=-\frac{1}{8}\frac{\r^3}{1-x}\pd.$$
By Theorem \ref{p1} and Lemma \ref{l2.2}, we have
\begin{align*}
\sD_1&=\pd x^\frac{a+2}{3}(1-x^2)^\frac{a+2}{3}\r^{-a}\pd-\frac{a(a+2)}{2}x^\frac{a-1}{3}(1-x)(1-x^2)^\frac{a-1}{3}\r^{-a-2},
\\
\sD_2&=\pd x^\frac{a+2}{3}(1-x^2)^\frac{a+2}{3}\pd-\frac{a(a+2)}{4}x^\frac{a+2}{3}(1-x^2)^\frac{a-1}{3}. 
\end{align*}
Letting $h=\r^\frac{a}{2}$, we have
$$(f_1h')'h=\frac{a(a+2)}{4}x^\frac{a-1}{3}(1-x^2)^\frac{a-1}{3}\r^{-2}(2-3x-6x^2-9x^3).$$
Then, the rest is just as above. \qed

\begin{rmk}See Section \ref{ss-multi} for another proof. 
When $a=1$, other proofs are given by Cooper \cite[Theorem 5.3]{cooper} and Maier \cite[Corollary 6.2]{maier}.  
\end{rmk}

An investigation of our proofs suggests that, contrary to \eqref{t2+}, the formulas \eqref{t3+} and \eqref{t4+} have no generalization to a formula with two free parameters. 

\subsection{Multivariable hypergeometric functions}\label{ss-multi}

The formula \eqref{t3+} (resp. \eqref{t4+}) is obtained as a specialization of a transformation formula for a hypergeometric function of two (resp. three) variables. 
Recall Lauricella's hypergeometric function of $m$ variables \cite{lauricella} 
\begin{align*}& 
F_D^{(m)}(a,b_1,\dots,b_m;c;x_1,\dots, x_m)
\\& =
\sum_{n_1,\dots, n_m \ge 0} \frac{(a)_{n_1+\cdots+n_m}(b_1)_{n_1}\cdots (b_m)_{n_m}}{(c)_{n_1+\cdots+n_m}(1)_{n_1}\cdots(1)_{n_m}}x_1^{n_1}\cdots x_m^{n_m}.
\end{align*}
It converges on the open unit polydisk $\{(x_1,\dots, x_m) \in \C^m \mid \forall i, |x_i|<1\}$. 
When $m=1$, this is the Gauss function and when $m=2$, this is Appell's function $F_1$ \cite{appell}. 

For $m=2$ and $m=3$, we have the following  transformation formulas.  
\begin{equation}\label{emo1}
\begin{split}
&(1+x+y)^a F_D^{(2)}\left(\frac{a}{3},\frac{a+1}{6},\frac{a+1}{6};\frac{a+5}{6};x^3,y^3\right)
\\&=F_D^{(2)}\left(\frac{a}{3},\frac{a+1}{6},\frac{a+1}{6};\frac{a+1}{2};1-u^3,1-v^3\right), 
\end{split}
\end{equation}
where
$$u=\frac{1+\o x+ \o^2 y}{1+x+y}, \quad v=\frac{1+\o^2 x+ \o y}{1+x+y} \quad  (\o=e^\frac{2\pi i}{3}).$$ 

\begin{equation}\label{emo2}\begin{split}
&(1+x+y+z)^\frac{a}{2}F_D^{(3)}\left(\frac{a}{4},\frac{a+2}{12},\frac{a+2}{12},\frac{a+2}{12};\frac{a+5}{6};x^2,y^2,z^2\right)
\\&=F_D^{(3)}\left(\frac{a}{4},\frac{a+2}{12},\frac{a+2}{12},\frac{a+2}{12},\frac{a+2}{3};1-u^2,1-v^2,1-w^2\right), 
\end{split}\end{equation}
where 
$$u=\frac{1-x-y+z}{1+x+y+z}, \quad v=\frac{1-x+y-z}{1+x+y+z}, \quad w=\frac{1+x-y-z}{1+x+y+z}.$$
The formula \eqref{emo1} is due Koike-Shiga \cite[Proposition 2.5]{koike-shiga1} for $a=1$ and Matsumoto-Ohara \cite[Theorem 1]{matsumoto-ohara} in general. 
The formula \eqref{emo2} is due Kato-Mastumoto \cite[Proposition 1]{kato-matsumoto} for $a=1$ and Matsumoto-Ohara \cite[Theorem 3]{matsumoto-ohara} in general. 

Then we obtain \eqref{t3+} (resp. \eqref{t4+}) from \eqref{emo1} (resp. \eqref{emo2}) by letting $x=y$ (resp. $x=y=z$), using the multinomial formula 
$$\frac{(a_1+\cdots +a_m)_n}{(1)_n}=\sum_{i_1+\cdots+i_m=n}\frac{(a_1)_{i_1}\cdots (a_m)_{i_m}}{(1)_{i_1}\cdots (1)_{i_m}}.$$
Note $(a)_n/(1)_n=(-1)^n\binom{-a}{n}$. 

All the proofs in \cite{kato-matsumoto}, \cite{koike-shiga1} and \cite{matsumoto-ohara} use mathematical software to compare the two systems of partial differential equations. 
One might be able to give simpler proofs by extending the method of this paper. 
Put 
$$\pd_i=\frac{\pd}{\pd x_i}, \quad D_i=x_i\pd_i, \quad \bD=\sum_{i=1}^m D_i.$$ 
Then, $F_D^{(m)}$ is a solution of the system of linear partial differential equations 
(see \cite[Chapter 3, 9.1]{iwasaki})
\begin{align*}
& \left((\bD+a)(D_i+b_i)-x_i^{-1}D_i(\bD+c-1)\right)y=0 \quad (i=1,\dots, m), 
\\ & \left((D_i+b_i)x_j^{-1}D_j-(D_j+b_j)x_i^{-1}D_i\right)y=0 \quad (i, j=1,\dots, m). 
\end{align*}
In fact, this system is of rank $m+1$. 
Using Theorem \ref{p1}, we easily obtain  the following. 

\begin{thm}
Put 
$$\vp_i(x)=x^c(1-x)^{1+a+b_i-c} \quad (i=1,\dots, m), \quad \psi(x)=x^{c-1}(1-x)^{1+a-c}. $$
Then, $F_D^{(m)}(a,b_1,\dots,b_m;c;x_1,\dots, x_m)$ is the unique solution of 
\begin{align*}
& \left(\pd_i \vp_i(x_i) \pd_i -ab_i \frac{\vp_i(x_i)}{x_i(1-x_i)} + \psi(x_i)\pd_i (1-x_i)^{b_i}
\sum_{j\ne i} x_j\pd_j\right)y=0  \quad (i=1,\dots, m), 
\\& \left(\pd_ix_i^{b_i}x_j^{b_j-1}\pd_j- \pd_jx_j^{b_j}x_i^{b_i-1}\pd_i\right)y=0 \quad 
(i,j=1,\dots, m), 
\end{align*}
such that
$$y(0,\dots, 0)=1, \quad (\pd_ i y)(0,\dots, 0)=\frac{a b_i}{c} \quad (i=1,\dots, m).$$
\end{thm}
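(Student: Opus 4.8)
The plan is to transcribe the known rank-$(m+1)$ system for $F_D^{(m)}$ into the canonical ``$\partial f \partial$'' shape, exactly as Theorem \ref{p1} did for the single-variable case, and then verify that the function actually satisfies it, plus record the initial values. The uniqueness of the holomorphic solution with the prescribed value and first-order derivatives at the origin follows from the fact that the cited system has rank $m+1$: the solution space is $(m+1)$-dimensional, but among local holomorphic solutions at $0$ the ``regular'' one is one-dimensional, and $y(0,\dots,0)$ together with the $m$ values $(\partial_i y)(0,\dots,0)$ pin it down (the remaining local solutions involve the non-integral exponents $1-c$, $b_i$, etc., coming from the singular locus $x_i=0$ and $x_i=x_j$). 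So the real content is the algebraic identity between the two presentations of the system.

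First I would treat the ``symmetric'' equations $\bigl(\partial_i x_i^{b_i}x_j^{b_j-1}\partial_j-\partial_j x_j^{b_j}x_i^{b_i-1}\partial_i\bigr)y=0$. Using $\partial_i x_i^{b_i}=x_i^{b_i}(\partial_i+b_i/x_i)=x_i^{b_i}\partial_i+b_ix_i^{b_i-1}$ one expands $\partial_i x_i^{b_i}x_j^{b_j-1}\partial_j = x_i^{b_i}x_j^{b_j-1}\partial_i\partial_j+b_ix_i^{b_i-1}x_j^{b_j-1}\partial_j$; the cross term $\partial_i\partial_j$ is symmetric in $i,j$ up to the scalar $x_i^{b_i}x_j^{b_j-1}$ versus $x_j^{b_j}x_i^{b_i-1}$, and after dividing through by $x_i^{b_i-1}x_j^{b_j-1}$ one recovers precisely the stated second-order operator $(D_i+b_i)x_j^{-1}D_j-(D_j+b_j)x_i^{-1}D_i$. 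This is the easy half and is essentially the computation in the proof of Theorem \ref{p1} carried out in each pair of variables.

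Next I would handle the $m$ main equations. The idea is to start from $(\bD+a)(D_i+b_i)y-x_i^{-1}D_i(\bD+c-1)y=0$, split $\bD=D_i+\sum_{j\ne i}D_j$, and isolate the ``pure $x_i$'' part, which is exactly the single-variable operator $(D_i+a)(D_i+b_i)-x_i^{-1}D_i(D_i+c-1)$ whose canonical form is $\partial_i\varphi_i(x_i)\partial_i-ab_i\varphi_i(x_i)/(x_i(1-x_i))$ by Theorem \ref{p1} (with $c$ unchanged and $e$ replaced by $1+a+b_i-c$). The leftover terms are those containing $\sum_{j\ne i}D_j=\sum_{j\ne i}x_j\partial_j$; collecting them gives something of the form $\bigl(\text{first-order operator in }x_i\bigr)\cdot\sum_{j\ne i}x_j\partial_j\,y$, and the claim is that this first-order-in-$x_i$ operator equals $\psi(x_i)\partial_i(1-x_i)^{b_i}$ with $\psi(x_i)=x_i^{c-1}(1-x_i)^{1+a-c}$. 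To check this I would again use $\partial_i(1-x_i)^{b_i}=(1-x_i)^{b_i}\partial_i-b_i(1-x_i)^{b_i-1}$ and the logarithmic-derivative identity $\partial\, u(x)=u(x)\bigl(\partial+u'/u\bigr)$, compute $\psi(x_i)(1-x_i)^{b_i}\bigl(\partial_i+\tfrac{c-1}{x_i}-\tfrac{1+a-c}{1-x_i}-\tfrac{b_i}{1-x_i}\bigr)$, and match it term by term with the coefficient of $\sum_{j\ne i}x_j\partial_j$ extracted from $(\bD+a)(D_i+b_i)-x_i^{-1}D_i(\bD+c-1)$, namely $a(D_i+b_i)-x_i^{-1}D_i(D_i+c-1)+\dots$ — more precisely the terms linear in the ``other-variable'' Euler operator are $a\cdot 1 + (D_i+b_i)\cdot 1$ from the first product and $-x_i^{-1}D_i\cdot 1 - x_i^{-1}(c-1)$-type contributions from the second, all acting as multiplication operators in $x_i$ on $\sum_{j\ne i}x_j\partial_j y$. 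Finally I would record the initial values: setting all variables to $0$, only the $i$-th equation's pure-$x_i$ part survives at leading order and reproduces $y(0)=1$, $(\partial_i y)(0)=ab_i/c$ just as in Theorem \ref{p1}, while the mixed equations are vacuous at the origin.

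The main obstacle I anticipate is bookkeeping in the second step: correctly identifying which terms of $(\bD+a)(D_i+b_i)-x_i^{-1}D_i(\bD+c-1)$ are ``pure $x_i$'' versus ``contain $\sum_{j\ne i}x_j\partial_j$'', since $\bD$ appears quadratically and one must be careful that $D_i$ and $\sum_{j\ne i}D_j$ commute but do not commute with the multiplication operators $x_i^{-1}$, $(1-x_i)^{b_i}$, etc. In particular the product $(\bD+a)(D_i+b_i)$ contributes $\sum_{j\ne i}D_j\,(D_i+b_i)$, and since $D_j$ commutes with every function of $x_i$ this is cleanly $(D_i+b_i)\sum_{j\ne i}D_j$ acting on $y$; similarly $x_i^{-1}D_i(\bD+c-1)=x_i^{-1}D_i(D_i+c-1)+x_i^{-1}D_i\sum_{j\ne i}D_j$. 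So the coefficient of $\sum_{j\ne i}D_j y=\sum_{j\ne i}x_j\partial_j y$ is the first-order-in-$x_i$ operator $(D_i+b_i)-x_i^{-1}D_i = (x_i\partial_i+b_i)-\partial_i$, and one must verify the identity $\psi(x_i)\partial_i(1-x_i)^{b_i}=(x_i\partial_i+b_i)-\partial_i$ as operators — equivalently, after multiplying out, that $x_i^{c-1}(1-x_i)^{1+a-c}$ is forced by requiring the leading symbol and the constant term to agree. If this last identity does not literally hold with the stated $\psi$ (the problem only asserts ``we easily obtain the following''), the fix is to recompute $\psi$ from the matching condition; but given the structure I expect it to come out exactly as written, the whole verification being ``just as above,'' i.e. the same logarithmic-derivative manipulation used throughout Section \ref{ss-comparison}.
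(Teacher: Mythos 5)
Your strategy is exactly the paper's: the paper offers no written proof beyond ``Using Theorem \ref{p1}, we easily obtain the following,'' and your decomposition $\bD=D_i+\sum_{j\ne i}D_j$, reduction of the pure-$x_i$ part to the canonical form of Theorem \ref{p1}, and rescaling of the compatibility equations by $x_i^{b_i-1}x_j^{b_j-1}$ is precisely the intended fleshing-out, including the correct observation that $\sum_{j\ne i}D_j$ commutes with everything depending only on $x_i$. The one genuine slip is in your last paragraph: the identity $\psi(x_i)\pd_i(1-x_i)^{b_i}=(x_i\pd_i+b_i)-\pd_i$ that you propose to verify is false as written, and the fix is not to recompute $\psi$. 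Passing from \eqref{e0} to \eqref{e3} multiplies the whole equation by $-x_i^{c-1}(1-x_i)^{a+b_i-c}$, and that same factor must be applied to the coefficient of $\sum_{j\ne i}x_j\pd_j$, which is $(D_i+b_i)-x_i^{-1}D_i=(x_i-1)\pd_i+b_i$. One then checks
$-x_i^{c-1}(1-x_i)^{a+b_i-c}\bigl((x_i-1)\pd_i+b_i\bigr)=x_i^{c-1}(1-x_i)^{1+a+b_i-c}\pd_i-b_ix_i^{c-1}(1-x_i)^{a+b_i-c}=\psi(x_i)\pd_i(1-x_i)^{b_i}$,
so the stated $\psi(x)=x^{c-1}(1-x)^{1+a-c}$ is exactly right once the global multiplier is not forgotten. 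With that correction your argument closes; the uniqueness discussion is adequate at the level of rigor the paper itself adopts (``the uniqueness is evident'').
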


\section{Quadratic Formulas}\label{s-quadratic}

The formulas \eqref{t2+} and \eqref{t4+} are two different combinations of the following formulas, 
the former due to Kummer \cite[p. 78, Formula 44]{kummer}, and the latter due to Ramanujan  (cf. \cite[p. 50, Chap. 11, Entry 4]{notebook2}). 
These are in fact equivalent as is clear from the proof below. 

\begin{thm}\label{thm2}
On a neighborhood of $x=0$, 
\begin{align}
(1+x)^a\FF{\frac{a}{2},\frac{a+1}{2}}{b+\frac{1}{2}}{x^2}
&=\FF{a,b}{2b}{1-\frac{1-x}{1+x}}, \label{tk}\\
(1+x)^a\FF{a,b}{a-b+1}{x}
&=\FF{\frac{a}{2},\frac{a+1}{2}}{a-b+1}{1-\left(\frac{1-x}{1+x}\right)^2}. \label{tr}
\end{align}
\end{thm}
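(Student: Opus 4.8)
The plan is to prove \eqref{tk} and \eqref{tr} by the same machinery used for \eqref{t2+}: realize each side as a solution of a Jacobi-canonical differential operator, use Lemma \ref{l2.2} to transport the operator for the right-hand side through the substitution $z(x)$, and then produce the multiplier $h(x)$ satisfying the comparison relation \eqref{e8}. In both cases the relevant substitution is built from $r=2$, $\r(x)=1+x$: for \eqref{tk} one takes $z(x)=1-\frac{1-x}{1+x}=\frac{2x}{1+x}$ (so $s=1$), while for \eqref{tr} one takes $z(x)=1-\bigl(\frac{1-x}{1+x}\bigr)^2=\frac{4x}{(1+x)^2}$ (so $s=2$), which is exactly the situation of the proof of \eqref{t2+}.

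Concretely, for \eqref{tr} I would let $F_1$ be the right-hand side and $F_2=\FF{a,b}{a-b+1}{x}$ be the base hypergeometric function. By Theorem \ref{p1}, $\FF{\frac{a}{2},\frac{a+1}{2}}{a-b+1}{w}$ is the solution of $\pd\,w^{a-b+1}(1-w)^{b}\pd - \frac{a(a+1)}{4}w^{a-b}(1-w)^{b-1}$ in the variable $w$ (here $e=1+\frac a2+\frac{a+1}2-(a-b+1)=b$). Setting $w=z(x)=\frac{4x}{(1+x)^2}$, so $1-w=\bigl(\frac{1-x}{1+x}\bigr)^2$ and $\pd_w=-\frac14\frac{\r(x)^3}{1-x}\pd$ exactly as in the proof of \eqref{t2+}, Lemma \ref{l2.2} yields $\sD_1$ as an explicit product of powers of $x$, $1-x$, $1+x$. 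On the other hand $\sD_2=\pd\,x^{a-b+1}(1-x)^{b}\pd - ab\,x^{a-b}(1-x)^{b-1}$ by Theorem \ref{p1}. One then checks that $h(x)=(1+x)^a$ gives $f_2=f_1h^2$ and verifies the second line of \eqref{e8} by the same logarithmic-derivative computation as in \eqref{t2+} (the polynomial factor arising from $(f_1h')'h$ will be quadratic in $x$). Finally compare initial values at $x=0$: $(hF_2)(0)=1$ and $(hF_2)'(0)=a+a\cdot\frac{ab}{a-b+1}\cdot 0$... more precisely $(hF_2)'(0)=h'(0)+F_2'(0)=a+\frac{ab}{a-b+1}$, which should match $F_1'(0)=z'(0)\cdot\frac{a(a+1)/4}{a-b+1}$ with $z'(0)=4$; a short arithmetic check confirms equality, so $hF_2=F_1$. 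The proof of \eqref{tk} is entirely parallel with $z(x)=\frac{2x}{1+x}$, $1-z=\frac{1-x}{1+x}$, $\pd_z=-\frac12(1+x)^2\pd$, $e=1+a+b-2b=a-b+1$ on the left and $e=1+\frac a2+\frac{a+1}2-(b+\frac12)=a-b+1$ on the right, and $h(x)=(1+x)^a$ again.

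To justify the remark that the two formulas are equivalent, I would observe that applying the $s=1$ substitution result \eqref{tk} (with $b$ replaced appropriately) and then the quadratic substitution, or equivalently composing $z(x)=\frac{2x}{1+x}$ with a further $x\mapsto x^2$ type step, shows each of \eqref{tk} and \eqref{tr} can be derived from the other by a change of variables together with \eqref{tle}/\eqref{tlp}; making this precise just amounts to matching parameters, which the uniqueness in Theorem \ref{p1} renders automatic once the initial values agree.

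The main obstacle is not conceptual but bookkeeping: one must correctly track all the exponents of $x$, $1-x$ and $1+x=\r(x)$ through Lemma \ref{l2.2}, and carry out the logarithmic-derivative expansion of $(f_1h')'h$ so that the resulting polynomial cancels $g_1h^2-g_2$ identically. As the proof of \eqref{t2+} shows, this verification is routine once the canonical form is in place, so I expect \eqref{tk} and \eqref{tr} to fall out with essentially the same two short computations, differing only in whether $s=1$ or $s=2$.
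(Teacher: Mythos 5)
Your method is the paper's method, and for \eqref{tk} your computation is essentially identical to the paper's: the right-hand side is handled by the $r=2$ involution (equivalently your $z(x)=\frac{2x}{1+x}$, which gives the same operator up to a constant), the left-hand side by \eqref{e5} with $s=2$, and the multiplier is $h=(1+x)^a$ with \eqref{e8} and initial values at $x=0$. For \eqref{tr} you propose a second full operator computation, whereas the paper avoids it: replacing $x$ by $\frac{1-x}{1+x}$ turns \eqref{tr} into an identity whose two sides are, by Theorem \ref{p1}, solutions of exactly the same differential equations already used for \eqref{tk} (they are the companion solutions at the other singular point), so only the initial values at $x=1$ need to be compared. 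Your direct route works too and your initial-value check at $x=0$ is correct ($a+\frac{ab}{a-b+1}=\frac{a(a+1)}{a-b+1}$), but it costs an extra verification of \eqref{e8} that the paper's observation makes unnecessary; that observation is also what makes the asserted equivalence of \eqref{tk} and \eqref{tr} transparent, where your closing paragraph stays vague.

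One concrete slip: for $\FF{\frac a2,\frac{a+1}2}{a-b+1}{w}$ you compute $e=1+\frac a2+\frac{a+1}2-(a-b+1)=b$, but the correct value is $e=b+\frac12$, so the canonical operator is $\pd\,w^{a-b+1}(1-w)^{b+\frac12}\pd-\frac{a(a+1)}{4}w^{a-b}(1-w)^{b-\frac12}$. With your $e=b$ the check $f_2=f_1h^2$ fails (a stray factor of $(1-x)^{-1}(1+x)$ survives); with $e=b+\frac12$ one gets $f_1=4^{a-b}x^{a-b+1}(1-x)^{2b}(1+x)^{-2a}$ and $f_1h^2=4^{a-b}f_2$, which is what you need (the harmless constant can be absorbed by rescaling $\sD_1$). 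So the argument goes through once this exponent is corrected.
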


\begin{proof}\eqref{tk}. Let $F_1$ be the right-hand side and $F_2$ be the ${}_2F_1(x^2)$ in the left-hand side.  
By \eqref{e6} with $r=2$, the differential operator for $F_1$ is 
\begin{align*}
\sD_1&=\pd x^{2b}(1-x)^{a-b+1}\r^{-a-b+1}\pd-2abx^{2b-1}(1-x)^{a-b}\r^{-a-b-1}.
\\& =\pd x^{2b}(1-x^2)^{a-b+1}\r^{-2a}-2abx^{2b-1}(1-x^2)^{a-b}\r^{-2a-1}\pd, 
\end{align*}
where $\r=1+x$. 
On the other hand, by \eqref{e5} with $s=2$, the differential operator for $F_2$ is 
$$\sD_2=\pd x^{2b}(1-x^2)^{a-b+1}\pd-a(a+1)x^{2b}(1-x^2)^{a-b}.$$
Letting $h=\r^a$ so that $f_1h^2=f_2$ (with the notations of Section \ref{ss-comparison}), 
we have 
$$(f_1h')'h=ax^{2b-1}(1-x^2)^{a-b}\r^{-1}(2b-(a+1)x-(a+1)x^2),$$
and the condition \eqref{e8} holds.  
Comparing the initial values, we obtain \eqref{tk}. 

\eqref{tr}. If we replace $x$ with $\frac{1-x}{1+x}$, the formula becomes
$$\left(\frac{1+x}{2}\right)^a\FF{\frac{a}{2},\frac{a+1}{2}}{a-b+1}{1-x^2}
=\FF{a,b}{a-b+1}{\frac{1-x}{1+x}}.$$
By Theorem \ref{p1}, the both sides  
satisfy the same differential equation as those of \eqref{tk}.  
Comparing the initial values at $x=1$, we obtain the equality above, hence \eqref{tr}.  
\end{proof}

Now, let us deduce \eqref{t2+} and \eqref{t4+} from Theorem \ref{thm2}. 
Rewrite \eqref{tk} and \eqref{tr} as
\begin{align}
(1+u)^a\FF{\frac{a}{2},\frac{a+1}{2}}{b+\frac{1}{2}}{u^2}&=\FF{a,b}{2b}{1-x}, \label{tk2}\\
\FF{\frac{a}{2},\frac{a+1}{2}}{a-b'+1}{1-v^2}&=(1+y)^a\FF{a,b'}{a-b'+1}{y}, \label{tr2}
\end{align}
where 
$$(1+x)(1+u)= (1+y)(1+v)=2.$$

First, if we let $x=\x^2$, $y=\y^2$ and $(1+\x)(1+\y)=2$, then 
$$u^2+v^2=\left(\frac{1-\x^2}{1+\x^2}\right)^2
+\left(\frac{1-(\frac{1-\x}{1+\x})^2}{1+(\frac{1-\x}{1+\x})^2}\right)^2=1.$$
Letting $b'=a-b+\frac{1}{2}$ and equating the left-hand sides of \eqref{tk2} and \eqref{tr2}, 
we obtain
$$(1+u)^{a}(1+y)^a\FF{a,a-b+\frac{1}{2}}{b+\frac{1}{2}}{y}=\FF{a,b}{2b}{1-x}.$$
Since $(1+u)(1+y)=(1+\y)^2$,  
it becomes \eqref{t2+} (in variable $\y$) after a suitable change of parameters. 

Secondly, if we let $x+y=1$, then 
$$v=\frac{1-y}{1+y}=\frac{x}{2-x}=\frac{\frac{1-u}{1+u}}{2-\frac{1-u}{1+u}}=\frac{1-u}{1+3u}.$$
Letting $b=b'=\frac{a+1}{3}$ and equating the right-hand sides of \eqref{tk2} and \eqref{tr2}, we obtain
$$(1+y)^a(1+u)^a \FF{\frac{a}{2},\frac{a+1}{2}}{\frac{2a+5}{6}}{u^2}
=\FF{\frac{a}{2},\frac{a+1}{2}}{\frac{2a+2}{3}}{1-v^2}. $$
Since $(1+y)(1+u)=1+3u$, it becomes \eqref{t4+} (in variable $u$) after a suitable change of parameters. 

Let us give short proofs of two other important formulas. 
First, the following is due to Gauss \cite[p. 226, Formula 102]{gauss}. 
\begin{thm}On a neighborhood of $x=0$, 
\begin{equation}\label{t8}
\FF{a,b}{\frac{a+b+1}{2}}{x}
=\FF{\frac{a}{2},\frac{b}{2}}{\frac{a+b+1}{2}}{1-(1-2x)^2}.
\end{equation}
\end{thm}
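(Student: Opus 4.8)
The plan is to apply the comparison method of Section~\ref{ss-comparison} with the trivial factor $h(x)\equiv1$. Write $c=\frac{a+b+1}{2}$ for the common third parameter of the two hypergeometric functions in \eqref{t8}. The key observation is that for the left-hand side $F_2(x)=\FF{a,b}{c}{x}$ one has $e=1+a+b-c=c$; hence Theorem~\ref{p1} gives the \emph{symmetric} operator
\begin{equation*}
\sD_2=\pd\,\bigl(x(1-x)\bigr)^{c}\pd-ab\,\bigl(x(1-x)\bigr)^{c-1}.
\end{equation*}

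For the right-hand side, put $z=z(x)=1-(1-2x)^2=4x(1-x)$, so that $z'(x)=4(1-2x)$ and $1-z=(1-2x)^2$. The inner function $\FF{\frac{a}{2},\frac{b}{2}}{c}{w}$ has $e'=1+\frac{a}{2}+\frac{b}{2}-c=\frac{1}{2}$, so by Theorem~\ref{p1} it is a solution of $\pd\,w^{c}(1-w)^{1/2}\pd-\frac{ab}{4}w^{c-1}(1-w)^{-1/2}$. Substituting $w=z(x)$ by Lemma~\ref{l2.2}, and using the branch $(1-z(x))^{1/2}=1-2x$ valid near $x=0$ (where $1-2x>0$), one computes
\begin{align*}
f_1(x)&=z(x)^{c}\bigl(1-z(x)\bigr)^{1/2}z'(x)^{-1}=4^{c-1}\bigl(x(1-x)\bigr)^{c},\\
g_1(x)&=z'(x)\cdot\frac{ab}{4}\,z(x)^{c-1}\bigl(1-z(x)\bigr)^{-1/2}=ab\,4^{c-1}\bigl(x(1-x)\bigr)^{c-1}.
\end{align*}
Hence $\sD_1=4^{c-1}\sD_2$, so $F_1$ and $F_2$ satisfy the same differential equation near $x=0$ (equivalently, \eqref{e8} holds with $h\equiv1$).

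It then remains to compare initial data: both sides of \eqref{t8} take the value $1$ at $x=0$, and by the chain rule the derivative of the right-hand side at $x=0$ is $\frac{(a/2)(b/2)}{c}\,z'(0)=\frac{ab}{4c}\cdot4=\frac{ab}{c}$, which equals $F_2'(0)$ by Theorem~\ref{p1}. Therefore $\FF{a,b}{c}{x}=\FF{\frac{a}{2},\frac{b}{2}}{c}{1-(1-2x)^2}$ on a neighborhood of $x=0$. There is no genuine obstacle here: thanks to the identity $e=c$ the target operator is already symmetric under $x\leftrightarrow1-x$, and the quadratic substitution $z=4x(1-x)$ produces only powers of $x(1-x)$, so the whole verification collapses to a one-line computation; the only points needing a little care are the choice of branch of $(1-z)^{1/2}$ and keeping track of the harmless constant factor $4^{c-1}$.
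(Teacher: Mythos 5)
Your proposal is correct and follows essentially the same route as the paper: apply Theorem~\ref{p1} and Lemma~\ref{l2.2} to the quadratic substitution, observe that $1-(1-2x)^2=4x(1-x)$ makes the transformed operator a (constant multiple of the) symmetric operator $\pd\,(x(1-x))^{c}\pd-ab\,(x(1-x))^{c-1}$ for the left-hand side, and compare initial values at $x=0$. The only cosmetic difference is that the paper substitutes $z=(1-2x)^2$ and reads the right-hand side as the solution $F(1-z)$ near $z=1$, while you substitute $z=4x(1-x)$ directly; the harmless constant $4^{c-1}$ you track explicitly is implicitly normalized away in the paper.
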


\begin{proof}Letting $z=(1-2x)^2$, we have 
$1-z=4x(1-x)$, $\pd_z=-\frac{1}{4(1-2x)}\pd$. 
By Theorem \ref{p1} and Lemma \ref{l2.2}, the differential operator for the right-hand side is 
$$\pd x^\frac{a+b+1}{2}(1-x)^\frac{a+b+1}{2}\pd -abx^\frac{a+b-1}{2}(1-x)^\frac{a+b-1}{2},$$
which coincides with the differential operator for the left-hand side. Comparing the initial values, we obtain \eqref{t8}.
\end{proof}

The following is due to Kummer \cite[Formula 53]{kummer}.
\begin{thm}
On a neighborhood of $x=0$, 
\begin{equation}\label{t9}
(1+x)^a\FF{a,\frac{a-b+1}{2}}{\frac{a+b+1}{2}}{-x}
=\FF{\frac{a}{2},\frac{b}{2}}{\frac{a+b+1}{2}}{1-\left(\frac{1-x}{1+x}\right)^2}.
\end{equation}
\end{thm}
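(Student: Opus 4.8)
The plan is to mimic the method used for all the quadratic formulas in this section, in particular the proof of \eqref{t8}: introduce the change of variable $z=\bigl(\tfrac{1-x}{1+x}\bigr)^2$, transport the canonical differential operator for the right-hand side ${}_2F_1$ via Lemma \ref{l2.2}, and then find the multiplier $h(x)=(1+x)^a$ by verifying the pair of identities \eqref{e8} against the canonical operator for the left-hand side ${}_2F_1(-x)$. More precisely, set $c=e=\tfrac{a+b+1}{2}$ in Theorem \ref{p1} so that the operator for $\FF{\frac a2,\frac b2}{\frac{a+b+1}{2}}{z}$ is $\pd\, z^c(1-z)^c\pd - \tfrac{ab}{4}\,z^{c-1}(1-z)^{c-1}\cdot\tfrac{1}{z(1-z)}\cdot z(1-z)$ — i.e. the Jacobi form with $\vp(z)=z^{c}(1-z)^{c}$ and leading constant $\tfrac{ab}{4}$. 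I would then substitute $z(x)=\bigl(\tfrac{1-x}{1+x}\bigr)^2$, which is exactly the composite substitution with $(r,s)=(2,2)$ treated in Section \ref{ss-transformation}, so that $1-z=\tfrac{4x}{(1+x)^2}$ and $\pd_z=-\tfrac14\tfrac{(1+x)^3}{1-x}\pd$ are already available, giving an explicit operator $\sD_1=\pd f_1(x)\pd-g_1(x)$ with $f_1,g_1$ products of powers of $x$, $1-x$, $1+x$.

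Next I would write down $\sD_2$, the canonical operator for the left-hand factor $F_2(x)=\FF{a,\frac{a-b+1}{2}}{\frac{a+b+1}{2}}{-x}$: by Theorem \ref{p1} the operator for $\FF{a,\frac{a-b+1}{2}}{\frac{a+b+1}{2}}{t}$ is $\pd\, t^{c}(1-t)^{e'}\pd - a\cdot\tfrac{a-b+1}{2}\,t^{c-1}(1-t)^{e'-1}$ with $c=\tfrac{a+b+1}{2}$ and $e'=1+a+\tfrac{a-b+1}{2}-c=a$, after which the reflection $t\mapsto -x$ via Lemma \ref{l2.2} (here $z'(x)=-1$, so $f$ and $g$ just get $1-t\mapsto 1+x$ and an overall sign on $g$) yields $\sD_2=\pd\,x^{c}(1+x)^{a}\cdot(\text{power})\,\pd-(\dots)$. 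With $h=(1+x)^a$ the first relation $f_2=f_1h^2$ in \eqref{e8} should fall out immediately from matching exponents of $1+x$; for the second relation I would compute $(f_1h')'h$ using the logarithmic-derivative trick exactly as in the proofs of \eqref{t2+} and \eqref{tk} — namely $f_1h'=a\,x^{?}(1-x)^{?}(1+x)^{?}$ and then $(f_1h')'h=f_1h'\bigl(\tfrac{?}{x}-\tfrac{?}{1-x}+\tfrac{?}{1+x}\bigr)h$, collecting a quadratic polynomial in $x$ — and check that $g_1h^2-(f_1h')'h=g_2$ identically.

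Finally I would compare initial data at $x=0$: both sides equal $1$ there, and differentiating, $(hF_2)'(0)=h'(0)+F_2'(0)=a-a\cdot\tfrac{a-b+1}{2}\big/\tfrac{a+b+1}{2}$ while $F_1'(0)=\tfrac{(a/2)(b/2)}{(a+b+1)/2}\cdot\tfrac{d}{dx}\bigl(1-(\tfrac{1-x}{1+x})^2\bigr)\big|_{0}$; since $z'(0)=4$ this is $\tfrac{ab}{2(a+b+1)}$, and one checks $a-\tfrac{a(a-b+1)}{a+b+1}=\tfrac{ab}{a+b+1}\cdot\tfrac12\cdot 2=\tfrac{ab}{2(a+b+1)}\cdot\tfrac{2\cdot 2}{2}$ — i.e. the two match, so Theorem \ref{p1} together with the discussion of Section \ref{ss-comparison} forces $hF_2=F_1$ on a neighborhood of $0$. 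The only real obstacle is the bookkeeping in the second identity of \eqref{e8}: one must keep the three exponents (of $x$, $1-x$, $1+x$) straight through the two successive substitutions and confirm that the quadratic polynomial produced by $(f_1h')'h$ combines with $g_1h^2$ to give precisely $g_2$ with constant $a\tfrac{a-b+1}{2}$; given the parallel computations already carried out for \eqref{tk} (which has the same $r=2$ reflection structure), I expect this to be a short and essentially mechanical check, and in fact \eqref{t9} can alternatively be read off by combining \eqref{tr} with \eqref{tlp} applied to the ${}_2F_1$ on the right of \eqref{tk}, which may be the cleanest route.
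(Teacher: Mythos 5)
Your overall strategy is exactly the paper's (canonical operators via Theorem \ref{p1} and Lemma \ref{l2.2}, the multiplier $h=(1+x)^a$, the condition \eqref{e8}, then initial values), but both operators are set up with wrong exponents, and with them the check of \eqref{e8} fails rather than closes. For the right-hand side you claim $c=e=\tfrac{a+b+1}{2}$; that coincidence holds for $\FF{a,b}{\frac{a+b+1}{2}}{\cdot}$ (the left side of \eqref{t8}), not for $\FF{\frac{a}{2},\frac{b}{2}}{\frac{a+b+1}{2}}{\cdot}$, where $e=1+\tfrac{a}{2}+\tfrac{b}{2}-\tfrac{a+b+1}{2}=\tfrac{1}{2}$. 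Since the argument is $1-z$ with $z=\bigl(\tfrac{1-x}{1+x}\bigr)^2$, the relevant $\vp$ is $z^{1/2}(1-z)^{\frac{a+b+1}{2}}$, and Lemma \ref{l2.2} then gives, up to an irrelevant constant, $\sD_1=\pd\, x^{\frac{a+b+1}{2}}(1+x)^{-a-b+1}\pd-ab\,x^{\frac{a+b-1}{2}}(1+x)^{-a-b-1}$; note that the factors of $1-x$ cancel completely, which your ansatz $z^{c}(1-z)^{c}$ would not allow. Likewise $e'=1+a+\tfrac{a-b+1}{2}-\tfrac{a+b+1}{2}=a-b+1$, not $a$, so the operator for the ${}_2F_1(-x)$ factor is $\sD_2=\pd\, x^{\frac{a+b+1}{2}}(1+x)^{a-b+1}\pd+\tfrac{a(a-b+1)}{2}x^{\frac{a+b-1}{2}}(1+x)^{a-b}$. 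With these, the first identity $f_2=f_1h^2$ reads $a-b+1=(-a-b+1)+2a$ and holds, and the second is the short logarithmic-derivative computation you describe; with your exponents, $f_1h^2$ retains a factor $(1-x)^{a+b}$ absent from $f_2$, so the very first identity already fails.

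Two smaller points. Your initial-value arithmetic is garbled, though the values do in fact match: $(hF_2)'(0)=a-\tfrac{a(a-b+1)}{a+b+1}=\tfrac{2ab}{a+b+1}$, while $F_1'(0)=\tfrac{ab/4}{(a+b+1)/2}\cdot 4=\tfrac{2ab}{a+b+1}$ (you dropped the factor $4$ from the inner derivative at one point). And the alternative derivation you suggest from \eqref{tk}, \eqref{tr} and \eqref{tlp} does not line up: the right side of \eqref{tr} has upper parameters $\tfrac{a}{2},\tfrac{a+1}{2}$, which can only be matched to $\tfrac{a}{2},\tfrac{b}{2}$ with lower parameter $\tfrac{a+b+1}{2}$ when $b=0$. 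The clean indirect route, noted in the paper's remark, is to apply Pfaff's formula \eqref{tlp} to the left side of \eqref{t8} and then replace $x$ by $\tfrac{x}{1+x}$.
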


\begin{proof}
Let $z=(\frac{1-x}{1+x})^2$ as in the proof of \eqref{t2+}. 
By Theorem \ref{p1} and Lemma \ref{l2.2}, the differential operator for the right-hand side is 
$$\sD_1=\pd x^\frac{a+b+1}{2}(1+x)^{-a-b+1}\pd-abx^\frac{a+b-1}{2}(1+x)^{-a-b-1}.$$
Similarly, the differential operator for the ${}_2F_1(-x)$ in the left-hand side is 
$$\sD_2=\pd x^\frac{a+b+1}{2}(1+x)^{a-b+1} \pd + \frac{a(a-b+1)}{2}x^\frac{a+b-1}{2}(1+x)^{a-b}.$$
Letting $h=(1+x)^a$, one verifies \eqref{e8}. 
Comparing the initial values, we obtain \eqref{t9}.
\end{proof}

\begin{rmk}
In fact, \eqref{t8} and \eqref{t9} are equivalent to each other; apply Pfaff's formula \eqref{tlp} to the left-hand side of \eqref{t8} 
and then replace $x$ with $\frac{x}{1+x}$, to obtain \eqref{t9}. 
\end{rmk}

\section{Cubic Formulas}\label{s-cubic}

A cubic analogue of Theorem \ref{thm2} is the following formulas due to Goursat \cite[p. 140, (127) and (126)]{goursat}. 
Here we give short proofs and see that 
two different combinations of \eqref{tg1} and \eqref{tg2} give the formulas \eqref{t3+} and \eqref{t3.2}. 

\begin{thm}\label{thm3}
On a neighborhood of $x=0$, 
\begin{equation}\label{tg1}
(1+8x)^a\FF{\frac{4a}{3},\frac{4a+1}{3}}{\frac{4a+5}{6}}{x}
=\FF{\frac{a}{3},\frac{a+1}{3}}{\frac{4a+5}{6}}{64x\left(\frac{1-x}{1+8x}\right)^3}.
\end{equation}
On a neighborhood of $x=1$, 
\begin{equation}\label{tg2}
\left(\frac{1+8x}{9}\right)^a \FF{\frac{4a}{3},\frac{4a+1}{3}}{\frac{4a+1}{2}}{1-x}
=\FF{\frac{a}{3},\frac{a+1}{3}}{\frac{4a+5}{6}}{64x\left(\frac{1-x}{1+8x}\right)^3}. 
\end{equation}
\end{thm}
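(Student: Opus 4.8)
The plan is to prove \eqref{tg1} and \eqref{tg2} exactly along the lines of the proofs of \eqref{t2+}, \eqref{tk}, \eqref{t8}, \eqref{t9} given above: identify the relevant change of variables $z(x)$, use Theorem \ref{p1} together with Lemma \ref{l2.2} to write down the differential operator $\sD_1$ for the right-hand side and the operator $\sD_2$ for the hypergeometric factor on the left, then find $h(x)$ making \eqref{e8} hold, and finally match initial values. For \eqref{tg1} the substitution is $z(x)=64x\bigl(\tfrac{1-x}{1+8x}\bigr)^3$, which is \emph{not} one of the three model cases of Section \ref{ss-transformation}; so the first step is to compute $1-z$ and $z'$ and check that $z^c(1-z)^e$ (with $c=e=\tfrac{4a+5}{6}$, so that $\vp(z)=z^{c}(1-z)^{c}$ after noting $e=1+\tfrac a3+\tfrac{a+1}{3}-\tfrac{4a+5}{6}=\tfrac{4a+5}{6}$) becomes a product of powers of $x$, $1-x$ and $1+8x$. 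The natural guess, by analogy with the identities collected before \S\ref{ss-comparison}, is
\[
1-z=\frac{(1-x)(1-20x-8x^2)^{?}}{(1+8x)^3}\cdot(\text{something}),
\]
so the genuinely computational first step is to factor $1-64x(1-x)^3/(1+8x)^3=\bigl((1+8x)^3-64x(1-x)^3\bigr)/(1+8x)^3$ and recognize the numerator as a perfect cube (or a cube times a simple factor); one expects $(1+8x)^3-64x(1-x)^3=(1-x+? )^3$-type collapse, which is exactly the cubic-resolvent phenomenon underlying Goursat's formula.

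Once $z$, $1-z$ and $z'$ are in factored form, I would apply Lemma \ref{l2.2} to $\sD=\pd\,\vp(z)\pd-\tfrac{4a}{3}\cdot\tfrac{4a+1}{3}\cdot\tfrac{\vp(z)}{z(1-z)}$ (the operator for $\FF{a/3,(a+1)/3}{(4a+5)/6}{z}$ — note the numerator parameters of the outer ${}_2F_1$ are $\tfrac a3,\tfrac{a+1}{3}$, \emph{not} $\tfrac{4a}{3},\tfrac{4a+1}{3}$, so $ab=\tfrac{a(a+1)}{9}$) to get $\sD_1$ in the variable $x$, and separately write $\sD_2$ for $\FF{4a/3,(4a+1)/3}{(4a+5)/6}{x}$ directly from Theorem \ref{p1}. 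Then $h(x)=(1+8x)^a$ and the first equation of \eqref{e8}, namely $f_2=f_1h^2$, is checked by a power count; the second equation, $g_2=g_1h^2-(f_1h')'h$, is the one real identity to verify, done via logarithmic derivatives as in the model proofs (so $(f_1h')'h=f_1h'\bigl(\text{sum of }\tfrac{\text{exponent}}{\text{linear factor}}\bigr)h$). Finally $F_1(0)=(hF_2)(0)=1$ and the derivatives at $0$ both equal $\tfrac{8a\cdot(4a/3)(4a+1)/3}{?}$ — concretely one checks $(hF_2)'(0)=8a$ from $h'(0)=8a$ and $F_2'(0)=0$ (since the argument is $x$ with $ab=\tfrac{4a(4a+1)}{9}$ and $c=\tfrac{4a+5}{6}$, so $F_2'(0)=\tfrac{16a(4a+1)/9}{(4a+5)/6}$, not $0$ — so one must compare $8a+\tfrac{16a(4a+1)/9}{(4a+5)/6}$ with $z'(0)\cdot\tfrac{a(a+1)/9}{(4a+5)/6}=64\cdot\tfrac{a(a+1)/9}{(4a+5)/6}$), and verifying this scalar identity closes the argument. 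For \eqref{tg2} I would not redo the work: the right-hand side is literally the same as in \eqref{tg1}, so its operator $\sD_1$ is already known; by the $x\leftrightarrow 1-x$ symmetry of Theorem \ref{p1} the function $\FF{4a/3,(4a+1)/3}{(4a+1)/2}{1-x}$ satisfies the operator with $c=\tfrac{4a+5}{6}$ replaced by $e=\tfrac{4a+1}{2}$ (check $1+\tfrac{4a}3+\tfrac{4a+1}{3}-\tfrac{4a+1}{2}=\tfrac{4a+5}{6}$, consistent), so after multiplying by $h=\bigl(\tfrac{1+8x}{9}\bigr)^a$ one is reduced to the \emph{same} computation \eqref{e8} evaluated near $x=1$ instead of $x=0$, plus matching the values $y(1)=\tfrac19\cdot 9^{?}$... — here one uses $z\to0$ as $x\to1$, so the right side $\to1$, and on the left $\bigl(\tfrac{1+8}{9}\bigr)^a\cdot1=1$, with first derivatives compared via $z'(1)$ and the known $y'(1)=-ab/e$ rule of Theorem \ref{p1}.

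I expect the main obstacle to be the very first step: establishing the factorization $(1+8x)^3-64x(1-x)^3=(\text{simple factor})^3$ (or the analogous collapse for $1-z$), which is what makes $z^c(1-z)^e$ a monomial-type expression and hence makes $\sD_1$ "of a manageable form" in the sense of \S\ref{ss-transformation}; without the right factored shape, the identity \eqref{e8} cannot even be stated cleanly. After that, the verification of the second half of \eqref{e8} and of the initial-value scalars is routine polynomial algebra of the same flavour as the proof of \eqref{t2+}, and the passage from \eqref{tg1} to \eqref{tg2} is essentially free once one invokes the $x\leftrightarrow1-x$ symmetry built into Theorem \ref{p1}. A secondary bookkeeping hazard is keeping straight the two different pairs of parameters — $(\tfrac a3,\tfrac{a+1}{3})$ on the ${}_2F_1$ of argument $z$ versus $(\tfrac{4a}{3},\tfrac{4a+1}{3})$ on the ${}_2F_1$ of argument $x$ or $1-x$ — since the products $ab$ entering $g_1$ and $g_2$ differ by the factor that must be absorbed by $z'$ in Lemma \ref{l2.2}.
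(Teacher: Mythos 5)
Your overall strategy is exactly the paper's: compute $z$, $1-z$, $z'$ in factored form, apply Theorem \ref{p1} and Lemma \ref{l2.2} to get $\sD_1$, take $h=(1+8x)^a$, verify \eqref{e8}, and match initial values at $x=0$ for \eqref{tg1} and at $x=1$ for \eqref{tg2} (the paper, like you, reuses the same $\sD_1$ and the $c\leftrightarrow e$ symmetry for the second formula, and your initial-value bookkeeping, e.g.\ $8a+\frac{4a(4a+1)/9}{(4a+5)/6}=64\cdot\frac{a(a+1)/9}{(4a+5)/6}$, does check out). However, there is one concrete error that, as written, would make the key step fail. For the right-hand side the exponent $e$ is \emph{not} $\frac{4a+5}{6}$: one has $1+\frac a3+\frac{a+1}{3}-\frac{4a+5}{6}=\frac{4a+8}{6}-\frac{4a+5}{6}=\frac12$, so $\vp(z)=z^{\frac{4a+5}{6}}(1-z)^{\frac12}$. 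Correspondingly, the collapse you are looking for is not a cube but a perfect \emph{square}:
\begin{equation*}
(1+8x)^3-64x(1-x)^3=(1-20x-8x^2)^2,\qquad\text{i.e.}\qquad 1-z=\frac{\t^2}{\r^3},\quad \t=1-20x-8x^2,\ \r=1+8x.
\end{equation*}
This is the crux of the proof: since $z'(x)=-64(1-x)^2\t\,\r^{-4}$ carries exactly one factor of $\t$, the half-integer exponent $e=\frac12$ is precisely what makes $(1-z)^{e}z'(x)^{-1}$ and $z'(x)(1-z)^{e-1}$ free of $\t$, yielding $f_1=x^{\frac{4a+5}{6}}(1-x)^{\frac{4a+1}{2}}\r^{-2a}$ and $g_1=\frac{64a(a+1)}{9}x^{\frac{4a-1}{6}}(1-x)^{\frac{4a+3}{2}}\r^{-2a-2}$, so that $f_2=f_1h^2$ holds with $h=\r^a$. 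With your values $c=e=\frac{4a+5}{6}$ and a cubic collapse, $f_1$ would retain a factor $\t^{\frac{4a+2}{3}}$ and the first equation of \eqref{e8} could not hold for any power of $1+8x$; the ``manageable form'' you correctly identify as the main obstacle is unobtainable on your stated guess. Once $e=\frac12$ and the square factorization are in place, the rest of your plan (the logarithmic-derivative verification of the second equation of \eqref{e8}, which gives $g_1h^2-(f_1h')'h=\frac{4a(4a+1)}{9}x^{\frac{4a-1}{6}}(1-x)^{\frac{4a-1}{2}}$, and the passage to \eqref{tg2}) goes through exactly as in the paper. A small further slip: $F_2'(0)=\frac{ab}{c}$ with $ab=\frac{4a(4a+1)}{9}$, not $\frac{16a(4a+1)}{9}$.
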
 
 
\begin{proof}
If we let $z=64x\bigl(\frac{1-x}{1+8x}\bigr)^3$, then
$$1-z=\frac{\t^2}{\r^3}, \quad \pd_z=-\frac{\r^4}{64(1-x)^2\t}\pd,$$
where we put 
$$\r=1+8x, \quad \t=1-20x-8x^2.$$ 
By Theorem \ref{p1} and Lemma \ref{l2.2}, the differential operator for the right-hand sides of \eqref{tg1} and \eqref{tg2} is 
$$\sD_1=\pd x^\frac{4a+5}{6}(1-x)^\frac{4a+1}{2}\r^{-2a}\pd-\frac{64a(a+1)}{9}x^\frac{4a-1}{6}(1-x)^\frac{4a+3}{2}\r^{-2a-2}.$$
Letting $h=\r^a$, we have
$$(f_1h')'h=\frac{4a}{3}x^\frac{4a-1}{6}(1-x)^\frac{4a-1}{2}\r^{-2}(4a+5-16(2a+1)x-16(1+5a)x^2),$$
and then
$$g_1h^2-(f_1h')'h=\frac{4a(4a+1)}{9}x^\frac{4a-1}{6}(1-x)^\frac{4a-1}{2}. $$
Therefore, by Theorem \ref{p1} and \eqref{e8}, $h\sD_1h$ is nothing but the differential operator for 
${}_2F_1(x)$ (resp. for ${}_2F_1(1-x)$) in the left-hand side of \eqref{tg1} (resp. \eqref{tg2}). 
Comparing the initial values at $x=0$ (resp. $x=1$), we obtain \eqref{tg1} (resp. \eqref{tg2}). 
\end{proof}

As was found by Chan \cite[Section 6]{chan}, \eqref{t3+} is deduced from Theorem \ref{thm3} as follows. 
If we let $x=\x^3$, $y=\y^3$ and $(1+2\x)(1+2\y)=3$, then 
$$x\left(\frac{1-x}{1+8x}\right)^3=y\left(\frac{1-y}{1+8y}\right)^3, \quad 
\frac{9(1+8x)}{1+8y}=(1+2\x)^4.$$
Equating the right-hand sides of \eqref{tg1} and \eqref{tg2} and letting $a \to a/4$, we obtain \eqref{t3+} (in variable $\xi$).  

On the other hand, by equating the left-hand sides of \eqref{tg1} and \eqref{tg2}, 
which is only possible for $a=\frac{1}{4}$, we obtain
\begin{align*}
&\left(\frac{9(1+8x)}{9-8x}\right)^{\frac{1}{4}}\FF{\frac{1}{12},\frac{5}{12}}{1}{64(1-x)\left(\frac{x}{9-8x}\right)^3}
\\
&=\FF{\frac{1}{12},\frac{5}{12}}{1}{64x\left(\frac{1-x}{1+8x}\right)^3}
\end{align*}
on a neighborhood of $x=0$. 
In particular, the both sides satisfy the same differential equation, which remains true near $x=1$. 
Replacing $x$ with $\frac{9x}{1+8x}$ and comparing the initial values at $x=1$, we obtain the following.

\begin{cor}\label{c1} 
On a neighborhood of $x=0$ (resp. $x=1$), 
\begin{equation}\label{t3.2}
(1+80x)^\frac{1}{4}\FF{\frac{1}{12},\frac{5}{12}}{1}{64 x^3 \frac{1-x}{1+8x}}
=C \FF{\frac{1}{12},\frac{5}{12}}{1}{64\frac{9x}{1+8x} \left(\frac{1-x}{1+80x}\right)^3}, 
\end{equation}
where $C=1$ (resp. $C=3$). 
\end{cor}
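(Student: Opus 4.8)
Let $P(x)$ and $Q(x)$ be, respectively, the left- and right-hand sides of the identity displayed immediately before the statement, so that $P=Q$ on a neighborhood of $x=0$ as explained there. Write $\Phi(z)=\FF{\frac{1}{12},\frac{5}{12}}{1}{z}$ and $z(x)=64x\bigl(\frac{1-x}{1+8x}\bigr)^{3}$, so that $Q=\Phi\circ z$, and let $\sD_1$ be the differential operator appearing in the proof of Theorem~\ref{thm3} at $a=\tfrac14$; there $(1+8x)^{1/4}\sD_1(1+8x)^{1/4}$ is the hypergeometric operator $\pd\,x(1-x)\,\pd-\tfrac29$ of $\FF{\frac13,\frac23}{1}{x}$. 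The plan is: (i) show that $P$ and $Q$ are both solutions of $\sD_1y=0$; (ii) deduce from the local structure of $\sD_1$ at $x=1$ that $P$ and $Q$ are proportional near $x=1$, with the constant read off from the values at $x=1$; (iii) substitute $x\mapsto\frac{9x}{1+8x}$ to pass to \eqref{t3.2}.

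For (i): since $Q=\Phi\circ z$, Theorem~\ref{p1} and Lemma~\ref{l2.2} show (exactly as $\sD_1$ was produced in the proof of Theorem~\ref{thm3}) that $Q$ solves $\sD_1y=0$ wherever it is holomorphic, in particular near both $x=0$ and $x=1$. For $P$ one uses that $64(1-x)\bigl(\frac{x}{9-8x}\bigr)^{3}=z(1-x)$, so $P(x)=\bigl(\frac{9(1+8x)}{9-8x}\bigr)^{1/4}\Phi\bigl(z(1-x)\bigr)$; applying \eqref{tg1} with $x$ replaced by $1-x$ (which is near $0$ when $x$ is near $1$), together with \eqref{tg2}, one gets on a neighborhood of $x=1$
$$P(x)=\bigl(9(1+8x)\bigr)^{1/4}\FF{\frac13,\frac23}{1}{1-x},\qquad
Q(x)=\Bigl(\tfrac{1+8x}{9}\Bigr)^{1/4}\FF{\frac13,\frac23}{1}{1-x}.$$
Both right-hand sides are scalar multiples of $(1+8x)^{1/4}\FF{\frac13,\frac23}{1}{1-x}$, which solves $\sD_1y=0$ because $\FF{\frac13,\frac23}{1}{1-x}$ solves $(1+8x)^{1/4}\sD_1(1+8x)^{1/4}y=0$ (the hypergeometric operator above is symmetric under $x\leftrightarrow 1-x$, since $c=e=1$); hence $P$ and $Q$ solve $\sD_1y=0$ near $x=1$ as well. (Alternatively, $P$ solves $\sD_1y=0$ everywhere already because $P=Q$ near $x=0$, once one verifies via \eqref{e8} that the operator attached to $P$ is $\sD_1$.)

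For (ii): $\FF{\frac13,\frac23}{1}{x}$ has a logarithmic singularity at $x=1$ (the case $c-a-b=0$) while $\FF{\frac13,\frac23}{1}{1-x}$ is holomorphic there, so the solutions of $\sD_1y=0$ holomorphic at $x=1$ form a one-dimensional space; thus $P=cQ$ near $x=1$ with $c=P(1)/Q(1)$. Since $z(1)=0$ and $\frac{9(1+8)}{9-8}=81$, one has $P(1)=81^{1/4}\Phi(0)=3$ and $Q(1)=\Phi(0)=1$, so $c=3$; likewise $c=1$ near $x=0$. For (iii), put $w=\frac{9x}{1+8x}$, so $1+8w=\frac{1+80x}{1+8x}$, $9-8w=\frac{9}{1+8x}$ and $1-w=\frac{1-x}{1+8x}$, whence $\frac{9(1+8w)}{9-8w}=1+80x$, $\frac{w}{9-8w}=x$ and $\frac{1-w}{1+8w}=\frac{1-x}{1+80x}$; replacing $x$ by $w$ carries $P$ into $(1+80x)^{1/4}\FF{\frac{1}{12},\frac{5}{12}}{1}{64x^{3}\frac{1-x}{1+8x}}$ and $Q$ into $\FF{\frac{1}{12},\frac{5}{12}}{1}{64\frac{9x}{1+8x}\bigl(\frac{1-x}{1+80x}\bigr)^{3}}$, i.e.\ into the two sides of \eqref{t3.2}. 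As $w$ is a local biholomorphism at $x=0$ (with $w(0)=0$) and at $x=1$ (with $w(1)=1$, $w'(1)=\tfrac19\neq0$), we obtain \eqref{t3.2} with $C=1$ near $x=0$ and $C=3$ near $x=1$.

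The delicate point — the one I expect to be the real obstacle to present carefully — is that $P=Q$ holds only on a neighborhood of $x=0$: along the real segment $(0,1)$ the arguments $z(x)$ and $z(1-x)$ reach the value $1$ (at $x=\frac{-5+3\sqrt3}{4}$, respectively $x=\frac{9-3\sqrt3}{4}$, where the expression minus $1$ has a double zero), so $P$ and $Q$ are not analytic continuations of one another along $(0,1)$ and one cannot pass from $x=0$ to $x=1$ by the identity theorem. This is why the constant changes, and why one must argue at $x=1$ through the equation $\sD_1y=0$ — genuinely checking that $P$ as well as $Q$ solves it there, and that its holomorphic-at-$x=1$ solutions form a line — rather than by naive continuation; everything else is bookkeeping.
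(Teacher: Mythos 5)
Your proof is correct and follows essentially the same route as the paper: combine \eqref{tg1} and \eqref{tg2} at $a=\tfrac14$ to get the identity relating $\Phi(z(x))$ and $\Phi(z(1-x))$, carry it to $x=1$ through the common operator $\sD_1$, and substitute $x\mapsto\frac{9x}{1+8x}$. Your determination of $C=3$ via the explicit local forms $(9(1+8x))^{1/4}\FF{\frac13,\frac23}{1}{1-x}$ and $\bigl(\frac{1+8x}{9}\bigr)^{1/4}\FF{\frac13,\frac23}{1}{1-x}$ is in fact more direct than the paper's terse ``comparing the initial values at $x=1$,'' and your closing remark on why the identity cannot be continued naively along $(0,1)$ is a correct and useful clarification that the paper omits.
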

The author does not know if it is equivalent to a known formula. 

\section{Quartic formulas}

Here we treat two quartic formulas. 
First, the following is an iteration of \eqref{t2+}.  

\begin{cor}\label{cor2}
On a neighborhood of $x=0$, 
\begin{equation}\label{t41}
(1+x)^{2a} \FF{\frac{a}{2},\frac{2a+1}{6}}{\frac{a+5}{6}}{x^4}
=\FF{\frac{a}{2},\frac{2a+1}{6}}{\frac{2a+1}{3}}{1-\left(\frac{1-x}{1+x}\right)^4}.
\end{equation}
\end{cor}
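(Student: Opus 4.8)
The plan is to obtain \eqref{t41} by applying \eqref{t2+} twice; this is what the statement means by calling \eqref{t41} an iteration of \eqref{t2+}. A one-shot comparison in the style of Section \ref{ss-comparison} is less convenient here, since the right-hand side of \eqref{t41} is, in the notation of Section \ref{ss-transformation}, the $(r,s)=(2,4)$ case; because $1-\left(\frac{1-x}{1+x}\right)^4=\frac{8x(1+x^2)}{(1+x)^4}$, its differential operator carries an extra irreducible factor $1+x^2$, so no prefactor $h(x)$ which is a pure power of $1+x$ can satisfy \eqref{e8}. Iterating \eqref{t2+} splits this factor between two steps, each of which uses only a tractable substitution.

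First I would specialize \eqref{t2+} by setting its parameter $b$ equal to $\frac{a+2}{3}$ and replacing its variable by $x^2$. After simplifying the parameters and using $1-\left(\frac{1-x^2}{1+x^2}\right)^2=\left(\frac{2x}{1+x^2}\right)^2$, this gives
\begin{equation*}
(1+x^2)^a\,\FF{\frac{a}{2},\frac{2a+1}{6}}{\frac{a+5}{6}}{x^4}=\FF{\frac{a}{2},\frac{a+2}{6}}{\frac{a+2}{3}}{\left(\frac{2x}{1+x^2}\right)^2},
\end{equation*}
whose left-hand side already carries the ${}_2F_1$ appearing in \eqref{t41}. Next I would specialize \eqref{t2+} again, now with $b$ equal to $\frac{2a+1}{3}$ and its variable replaced by $\frac{2x}{1+x^2}$; the effect of this substitution is that $\frac{1-\frac{2x}{1+x^2}}{1+\frac{2x}{1+x^2}}=\left(\frac{1-x}{1+x}\right)^2$, so a square is promoted to a fourth power, while $1+\frac{2x}{1+x^2}=\frac{(1+x)^2}{1+x^2}$. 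After simplifying parameters this reads
\begin{equation*}
\left(\frac{(1+x)^2}{1+x^2}\right)^a\,\FF{\frac{a}{2},\frac{a+2}{6}}{\frac{a+2}{3}}{\left(\frac{2x}{1+x^2}\right)^2}=\FF{\frac{a}{2},\frac{2a+1}{6}}{\frac{2a+1}{3}}{1-\left(\frac{1-x}{1+x}\right)^4}.
\end{equation*}

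The ${}_2F_1$ on the left of the second identity is exactly the one produced by the first, so substituting for it and combining the prefactors, $\left(\frac{(1+x)^2}{1+x^2}\right)^a(1+x^2)^a=(1+x)^{2a}$, yields precisely \eqref{t41}. All the substituted arguments vanish (resp.\ approach $1$) at $x=0$ and all the functions involved are holomorphic there, so, exactly as for \eqref{t2+}, the identity holds on a neighborhood of $x=0$.

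I do not expect a real obstacle: the whole argument is the two specializations above plus the three elementary identities for $x\mapsto x^2$ and $x\mapsto\frac{2x}{1+x^2}$. The only step needing care is the parameter bookkeeping — choosing $b=\frac{a+2}{3}$ in the first application and $b=\frac{2a+1}{3}$ in the second so that the upper and lower parameters of the intermediate function $\FF{\frac{a}{2},\frac{a+2}{6}}{\frac{a+2}{3}}{\cdot}$ coincide for both applications, which is what lets the two identities be chained.
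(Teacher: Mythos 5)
Your proposal is correct and is essentially the paper's own proof: both iterate \eqref{t2+} twice with the same parameter choices $b=\frac{a+2}{3}$ and $b=\frac{2a+1}{3}$, chained through the intermediate function $\FF{\frac{a}{2},\frac{a+2}{6}}{\frac{a+2}{3}}{\cdot}$. The only cosmetic difference is that the paper phrases the second application via the involution variables $\y=\frac{1-x}{1+x}$, $v=\frac{1-\y^2}{1+\y^2}$ with $u^2+v^2=1$, whereas you substitute the intermediate argument $\frac{2x}{1+x^2}$ (which equals that $v$) directly.
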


\begin{proof}
As in the proof of \eqref{t2+} given in Section \ref{s-quadratic}, let $x=\x^2$, $y=\y^2$ and 
$$(1+\x)(1+\y)=(1+x)(1+u)=(1+y)(1+v)=2,$$
so that $u^2+v^2=1$.  
Solving $(\frac{a}{2},\frac{b}{2},b)=(\frac{a'}{2},\frac{a'-b'+1}{2}, \frac{b'+1}{2})$, we have $b=\frac{a+2}{3}$, 
$(\frac{a'}{2},\frac{b'}{2},b')=(\frac{a}{2},\frac{2a+1}{6},\frac{2a+1}{3})$. 
Then by \eqref{t2+}, we have
\begin{align*}
(1+\x^2)^a\FF{\frac{a}{2},\frac{2a+1}{6}}{\frac{a+5}{6}}{\x^4}&=\FF{\frac{a}{2},\frac{a+2}{6}}{\frac{a+2}{3}}{1-u^2}
\\=\FF{\frac{a}{2},\frac{a+2}{6}}{\frac{a+2}{3}}{v^2}&=\left(\frac{1+\y^2}{2}\right)^a \FF{\frac{a}{2},\frac{2a+1}{6}}{\frac{2a+1}{3}}{1-\y^4}. 
\end{align*}
Since $\frac{1+\y^2}{2}=\frac{1+\x^2}{(1+\x)^2}$, we obtain \eqref{t41} (in variable $\x$). 
\end{proof}

The following formula of Matsumoto-Ohara \cite[Corollary 2]{matsumoto-ohara} is a specialization of a transformation formula (loc. cit. Theorem 2) for Appell's function $F_1$, whose proof is similar to \eqref{emo1} and \eqref{emo2}. 
We give a direct proof.

\begin{thm}
On a neighborhood of $x=0$, 
\begin{equation}\label{t10}
(1+x)^a\FF{\frac{a}{2},\frac{a+1}{4}}{\frac{a+3}{4}}{-x^2}
=\FF{\frac{a}{4},\frac{a+1}{4}}{\frac{a+1}{2}}{1-\left(\frac{1-x}{1+x}\right)^4}.
\end{equation}
\end{thm}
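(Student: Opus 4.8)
The plan is to follow the same template used throughout the paper: identify the two hypergeometric functions, compute the canonical differential operators via Theorem \ref{p1} and Lemma \ref{l2.2}, find the multiplier $h(x)$ and verify the operator identity \eqref{e8}, then compare initial values at $x=0$. Concretely, let $F_1$ be the right-hand side of \eqref{t10} and let $F_2$ be the ${}_2F_1(-x^2)$ appearing on the left. The argument of $F_1$ is $z=\bigl(\frac{1-x}{1+x}\bigr)^4$, which is the case $(r,s)=(2,4)$ of the substitution $z=\bigl(\frac{1-x}{\r(x)}\bigr)^s$ with $\r(x)=1+x$; using \eqref{e7} we get $\pd_z=-\frac{1}{8}(1-x)^{-3}(1+x)^5\pd$. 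The key simplification is that $1-z=\frac{(1-z)(1+z)}{1+z}$ factors nicely: since $1-\bigl(\frac{1-x}{1+x}\bigr)^2=\frac{4x}{(1+x)^2}$ and $1+\bigl(\frac{1-x}{1+x}\bigr)^2=\frac{2(1+x^2)}{(1+x)^2}$, we obtain $1-z=\frac{8x(1+x^2)}{(1+x)^4}$. Hence $z^c(1-z)^e$ with $c=\frac{a+1}{2}$, $e=\frac{1}{2}$ becomes a product of powers of $x$, $1+x^2$ and $1+x$, so $\sD_1$ is of manageable form.

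First I would write down $\sD_1$ explicitly. By Theorem \ref{p1} the operator for ${}_2F_1\bigl({\frac{a}{4},\frac{a+1}{4}\atop\frac{a+1}{2}};z\bigr)$ in the variable $z$ is $\pd_z z^{\frac{a+1}{2}}(1-z)^{\frac12}\pd_z - \frac{a(a+1)}{4}z^{\frac{a-1}{2}}(1-z)^{-\frac12}$, and applying Lemma \ref{l2.2} with the above $\pd_z$ and the expressions for $z$, $1-z$ turns this into
\begin{equation*}
\sD_1=\pd\, x^{\frac{a+1}{2}}(1+x^2)^{\frac12}(1+x)^{-a}\pd - \frac{a(a+1)}{2}x^{\frac{a-1}{2}}(1+x^2)^{-\frac12}(1+x)^{-a-2}\cdot(\text{a polynomial correction}),
\end{equation*}
where the polynomial correction comes from the derivative factor; I would simplify carefully so that $f_1(x)=x^{\frac{a+1}{2}}(1+x^2)^{\frac12}(1+x)^{-a}$ after absorbing constants. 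On the other side, by \eqref{e5} with $s=2$ applied to the canonical operator for ${}_2F_1\bigl({\frac{a}{2},\frac{a+1}{4}\atop\frac{a+3}{4}};w\bigr)$ evaluated at $w=-x^2$ (so really one combines the $s=2$ rule with the sign change $w\mapsto -w$, equivalently $x\mapsto ix$), one gets $\sD_2=\pd\, x^{?}(1+x^2)^{?}\pd - (\cdots)$ with $f_2(x)=x^{\frac{a+1}{2}}(1+x^2)^{\frac{a+3}{4}-\frac{a}{2}-\cdots}$; I would compute the exponent of $1+x^2$ so that it works out to make $f_2=f_1h^2$ solvable. Taking $h=(1+x)^a$ as the formula suggests, the first equality of \eqref{e8}, $f_2=f_1h^2$, should hold once the exponents of $x$ and $1+x^2$ match; this match is really the content of the identity $\frac{a+3}{4}-\frac{a}{2}=\frac{1-a}{4}$ combined with the $\frac12$ from $\sqrt{1+x^2}$, and I expect it to come out cleanly.

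The main obstacle, as in the other proofs, is the second equality of \eqref{e8}: verifying $g_2=g_1h^2-(f_1h')'h$. I would compute $f_1h'$ using logarithmic derivatives, $f_1h'=f_1\cdot\frac{ah}{1+x}/h\cdot h = a\,x^{\frac{a+1}{2}}(1+x^2)^{\frac12}(1+x)^{-1}$, then differentiate once more via $(f_1h')'=f_1h'\bigl(\frac{a+1}{2x}+\frac{x}{1+x^2}-\frac{1}{1+x}\bigr)$ and multiply by $h$. The result will be $x^{\frac{a-1}{2}}(1+x^2)^{-\frac12}(1+x)^{-2}$ times a quadratic (or cubic) polynomial in $x$; subtracting this from $g_1h^2$, which is $\frac{a(a+1)}{2}x^{\frac{a-1}{2}}(1+x^2)^{-\frac12}(1+x)^{-2}\cdot P(x)$ for the polynomial $P$ arising from the derivative factor in $\pd_z$, the polynomial parts must combine to leave exactly $g_2$, i.e. a constant multiple of $x^{\frac{a+1}{2}}(1+x^2)^{\frac{a-1}{4}}$ — which forces the polynomial combination to equal a power of $(1+x^2)$ times $(1+x)^2$. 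That polynomial identity is the one genuinely computational step; I would just expand both sides and check coefficients. Finally, comparing initial values at $x=0$: $F_1(0)=1=(hF_2)(0)$ and, differentiating, $F_1'(0)=\frac{\frac{a}{4}\cdot\frac{a+1}{4}}{\frac{a+1}{2}}\cdot z'(0)$ where $z'(0)=-4\cdot\frac{d}{dx}\frac{1-x}{1+x}\big|_0\cdot(\text{chain})$; since $z\sim 1-8x$ near $0$ we get $z'(0)=-8$, so $F_1'(0)=-\frac{a}{4}\cdot 2=-\frac{a}{2}$... here I would instead note $1-z\sim 8x$ so $F_1'(0)=\frac{\frac a4\cdot\frac{a+1}4}{\frac{a+1}2}\cdot 8=a$, matching $(hF_2)'(0)=a\cdot 1+1\cdot 0=a$ (the ${}_2F_1(-x^2)$ has vanishing derivative at $0$). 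Hence $hF_2=F_1$ by Theorem \ref{p1}, proving \eqref{t10}.
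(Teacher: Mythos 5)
Your strategy is exactly the paper's: canonical operators via Theorem \ref{p1}, Lemma \ref{l2.2} applied to $z=-x^2$ on the left and to the $(r,s)=(2,4)$ substitution on the right, multiplier $h=(1+x)^a$, condition \eqref{e8}, and the initial-value check at $x=0$ (your final computation $F_1'(0)=a=(hF_2)'(0)$ is correct, as is the key factorization $1-\bigl(\frac{1-x}{1+x}\bigr)^4=\frac{8x(1+x^2)}{(1+x)^4}$). However, the concrete operator you write down for the right-hand side is wrong, and with it the very first condition of \eqref{e8} fails. For the parameters $a'=\frac a4$, $b'=\frac{a+1}4$, $c'=\frac{a+1}2$ one has $e'=1+a'+b'-c'=\frac34$, not $\frac12$, and the constant $a'b'$ is $\frac{a(a+1)}{16}$, not $\frac{a(a+1)}{4}$. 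Carrying the substitution through (and discarding an overall nonzero constant) yields
\begin{equation*}
f_1=\bigl(x(1+x^2)\bigr)^{\frac{a+1}{2}}(1+x)^{-2a},\qquad
g_1=\frac{a(a+1)}{2}\bigl(x(1+x^2)\bigr)^{\frac{a-1}{2}}\Bigl(\frac{1-x}{1+x}\Bigr)^{2}(1+x)^{-2a},
\end{equation*}
whereas you propose $f_1=x^{\frac{a+1}{2}}(1+x^2)^{\frac12}(1+x)^{-a}$. Since $f_2=\bigl(x(1+x^2)\bigr)^{\frac{a+1}{2}}$ and $g_2=-\frac{a(a+1)}{2}x^{\frac{a+1}{2}}(1+x^2)^{\frac{a-1}{2}}$, your $f_1$ gives $f_1h^2=x^{\frac{a+1}{2}}(1+x^2)^{\frac12}(1+x)^{a}\ne f_2$ for $h=(1+x)^a$, so the argument as written breaks down; the exponent bookkeeping you invoke does not rescue it (also $\frac{a+3}{4}-\frac a2=\frac{3-a}{4}$, not $\frac{1-a}{4}$).

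With the corrected $f_1$, $g_1$ the proof closes exactly as you planned: $f_1h'=a\bigl(x(1+x^2)\bigr)^{\frac{a+1}{2}}(1+x)^{-a-1}$, the logarithmic derivative contributes the polynomial $1-x+3x^2+x^3$, and \eqref{e8} reduces to the identity $(1-x)^2-(1-x+3x^2+x^3)=-x(1+x)^2$. So the route is viable and is precisely the one the paper takes, but you must redo the computation of $\sD_1$ starting from $e'=\frac34$ before the verification of \eqref{e8} can succeed.
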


\begin{proof}By Theorem \ref{p1} and \eqref{e7}, the differential operator $\sD_1$ 
for the right-hand side is given by
\begin{align*}
f_1&=\left(x(1+x^2)\right)^\frac{a+1}{2}(1+x)^{-2a}, \\ 
g_1&=\frac{a(a+1)}{2}\left(x(1+x^2)\right)^\frac{a-1}{2}\left(\frac{1-x}{1+x}\right)^2(1+x)^{-2a}.
\end{align*}
On the other hand, by Theorem \ref{p1} and Lemma \ref{l2.2} applied to $z(x)=-x^2$, 
the differential operator $\sD_2$ 
for the ${}_2F_1(-x^2)$ in the left-hand side is given by
$$f_2=(x(1+x^2))^\frac{a+1}{2}, \quad 
g_2=-\frac{a(a+1)}{2}x^\frac{a+1}{2} (1+x^2)^\frac{a-1}{2}.$$
Letting $h=(1+x)^a$, one easily verifies \eqref{e8}. Comparing the initial values, we obtain \eqref{t10}.  
\end{proof}

\begin{rmk}The author learned from Hiroyuki Ochiai that \eqref{t10} is also obtained as a combination of \eqref{t2+} and \eqref{t9} as follows. 
Use the same notations as in the proof of Corollary \ref{cor2}. 
By \eqref{t2+} with $(a,b) \to (\frac{a}{2}, \frac{a+1}{2})$, 
$$\FF{\frac{a}{4},\frac{1}{4}}{\frac{a+3}{4}}{u^2}=\left(\frac{1+x}{2}\right)^\frac{a}{2}\FF{\frac{a}{4},\frac{a+1}{4}}{\frac{a+1}{2}}{1-x^2}.$$
By \eqref{t9} with $(a,b)\to (\frac{a}{2},\frac{1}{2})$, 
$$\FF{\frac{a}{4},\frac{1}{4}}{\frac{a+3}{4}}{1-v^2}
=(1+y)^\frac{a}{2}\FF{\frac{a}{2},\frac{a+1}{4}}{\frac{a+3}{4}}{-y}.$$
Then \eqref{t10} (in variable $\y$) follows similarly as \eqref{t41}. 
\end{rmk}

\section{$q$-analogues}

We give a new canonical form of the difference equation for a $q$-hypergeometric series ${}_2\phi_1$ which generalizes \eqref{e3}, and apply it to give a proof of Heine's transformation formula.  

\subsection{Preliminaries}
For the moment, let $\a$, $\b$, $\g$ and $q$ be indeterminates. 
Recall the {\em $q$-Pochhammer symbol}
$$(\a;q)_n=\prod_{i=0}^{n-1}(1-\a q^i) \ \in \Z[\a,q]\cap \Z[\a][[q]]^*.$$
Here, for a ring $R$, $R^*$ denotes its unit group. 
The $q$-hypergeometric series ${}_2\phi_1$ is defined by 
$${}_2\phi_1\left({\a,\b\atop \g};x\right)=\sum_{n=0}^\infty \frac{(\a;q)_n(\b;q)_n}{(\g;q)_n(q;q)_n} x^n. $$
This is a power series in $x$ with coefficients in $\Q(\a,\b,\g,q) \cap \Z[\a,\b,\g][[q]]^*$. 

Recall the {\em $q$-number}  
$$[n]=\frac{1-q^n}{1-q} \quad (n \in \Z).$$
Note that $[n]|_{q=1}=n$. 
We write $\a=q^a$ symbolically and define the number 
$$[a]=\frac{1-q^a}{1-q}=\frac{1-\a}{1-q} \quad \in \Q(\a,q)\cap \Z[\a][[q]]^*.$$

Define a difference operator ($q$-derivation) $\D$ by 
$$\D f(x)= \frac{f(x)-f(qx)}{x-qx}.$$
Following Jackson \cite{jackson}, define the shift operator $q^\d$ by 
$$q^\d f(x)=f(qx),$$
and the difference operator by
$$[\d+a]=\frac{1-q^{\d+a}}{1-q}=\frac{1-\a q^\d}{1-q}.$$
In particular, we have by definition 
$$[\d]=x\D.$$
Then we have 
$$[\d+a] x^n=\frac{1-\a q^n}{1-q}x^n =[a+n]x^n.$$
Hence $[\d+a]$ is the $q$-analogue of the differential operator $D+a$, where $D=x\frac{d}{dx}$. 

Any function (or a power series) $f(x)$ defines a multiplication operator.    
To avoid possible confusion in writing operators, we write 
$$f'(x)=\D f(x) \text{ (the function $f(x)$ acted by $\D$).}$$
As an operator, $\D f(x)$ means the composition of $f(x)$ and $\D$. 

\begin{lem}\label{l7.1}
For any $f(x)$, we have an identity of operators
$$\D f(x)= f(qx)\D+f'(x).$$
\end{lem}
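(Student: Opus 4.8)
The plan is to verify the claimed operator identity by applying both sides to an arbitrary test function $g(x)$ and comparing, which reduces everything to the definition of $\D$ together with a splitting trick in the numerator (essentially a $q$-analogue of the Leibniz rule).

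First I would write out the left-hand side acting on $g$: by definition of $\D$ as a multiplication-then-difference composition, $(\D f(x))g(x) = \D(f(x)g(x)) = \dfrac{f(x)g(x)-f(qx)g(qx)}{x-qx}$. The key manipulation is to insert $\pm f(qx)g(x)$ in the numerator, giving
\begin{equation*}
\frac{f(x)g(x)-f(qx)g(qx)}{x-qx} = \frac{(f(x)-f(qx))g(x)}{x-qx} + f(qx)\cdot\frac{g(x)-g(qx)}{x-qx}.
\end{equation*}
The first term is $f'(x)g(x)$ by the definition of $f'$, and the second term is $f(qx)(\D g)(x)$, again by the definition of $\D$. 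Hence $(\D f(x))g(x) = f'(x)g(x) + f(qx)(\D g)(x) = (f(qx)\D + f'(x))g(x)$, and since $g$ is arbitrary the operator identity follows. Note that $f(qx)$ and $f'(x)$ are both multiplication operators, so the order in which they are written on the right is immaterial.

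I do not expect any genuine obstacle here; the only point requiring a little care is bookkeeping of the operator conventions fixed just above the statement, namely that $\D f(x)$ denotes the composition (first multiply by $f$, then apply $\D$) while $f'(x)$ denotes the \emph{function} $\D f$ regarded as a multiplication operator, and that $q^\d$-type shifts act on the variable inside $f$. Once these conventions are kept straight, the computation above is complete. If desired, one could equally check the identity on monomials $g(x)=x^n$ and then extend by linearity and continuity, but the direct computation on a general $g$ is cleaner and avoids convergence remarks.
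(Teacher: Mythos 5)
Your proof is correct and is essentially identical to the paper's: both apply the operator identity to an arbitrary test function $g$ and split the difference quotient for $(f(x)g(x))'$ by inserting $\pm f(qx)g(x)$, yielding the $q$-Leibniz rule $f(qx)g'(x)+f'(x)g(x)$. No issues.
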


\begin{proof}Since
\begin{align*}
&(f(x)g(x))'=\frac{f(x)g(x)-f(qx)g(qx)}{x-qx}
\\&=f(qx)\frac{g(x)-g(qx)}{x-qx}+\frac{f(x)-f(qx)}{x-qx}g(x)=f(qx)g'(x)+f'(x)g(x)
\end{align*}
for any $g(x)$, the lemma follows. 
\end{proof}

\subsection{$q$-hypergeometric difference equation}

Recall the differential equation \eqref{e0} for $\FF{a,b}{c}{x}$.  
Similarly, since
$$[\d+a]x^n= \frac{(\a;q)_{n+1}}{(1-q)(\a;q)_n}x^n, $$
the series ${}_2\phi_1\left({\a,\b\atop \g};x\right)$ satisfies the difference equation
\begin{equation}\label{e0q}
\left([\d+a][\d+b]-x^{-1}[\d][\d+c-1]\right)y=0, 
\end{equation}
where we used symbols $\a=q^a$, $\b=q^b$ and $\g=q^c$. 
From this, we derive difference equations analogous to \eqref{e1} and \eqref{e2}. 

\begin{ppn}
Put $\e=q\a\b\g^{-1}$. Then 
${}_2\phi_1\left({\a,\b\atop \g};x\right)$ is a solution of the difference equation 
\begin{equation}\label{e1q}
\Bigl(\g x(1-\e x) \D^2 +\bigl([c]-(\a\b+\b[a]+\a[b])x\bigr)\D-[a][b]\Bigr)y=0, 
\end{equation}
or equivalently
\begin{equation}\label{e2q}
\left(\D^2+\left(\frac{[c]}{\g x}
-\frac{\a\b+\b[a]+\a[b]-\e[c]}{\g(1-\e x)}\right)\D
-\frac{[a][b]}{\g x(1-\e x)}\right)y=0.
\end{equation}
\end{ppn}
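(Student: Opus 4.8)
The plan is to rewrite the operator in \eqref{e0q} entirely in terms of the $q$-derivation $\D$ and multiplication by $x$, and then to read off \eqref{e1q} and \eqref{e2q}. No new solution need be produced: since ${}_2\phi_1\left({\a,\b\atop\g};x\right)$ already satisfies \eqref{e0q}, it suffices to show that \eqref{e0q}, \eqref{e1q} and \eqref{e2q} are equivalent difference equations.

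First I would record the operator identities $[\d]=x\D$ and $[\d+a]=[a]+\a x\D$; the latter follows from $q^\d=1-(1-q)x\D$, which gives $\frac{1-\a q^\d}{1-q}=[a]+\a\frac{1-q^\d}{1-q}$. Similarly $[\d+b]=[b]+\b x\D$ and, using $q^{c-1}=\g q^{-1}$, $[\d+c-1]=[c-1]+\g q^{-1}x\D$. Next, Lemma \ref{l7.1} applied to $f(x)=x$ yields the commutation rule $\D x=qx\D+1$, and hence $(x\D)^2=qx^2\D^2+x\D$. Normal-ordering the two products in \eqref{e0q} with these rules gives
$$[\d+a][\d+b]=[a][b]+(\a\b+\b[a]+\a[b])x\D+q\a\b\,x^2\D^2,$$
$$x^{-1}[\d][\d+c-1]=\g x\D^2+\bigl([c-1]+\g q^{-1}\bigr)\D=\g x\D^2+[c]\D,$$
the last equality because $[c-1]+\g q^{-1}=\frac{1-\g}{1-q}=[c]$. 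Subtracting, using $\g\e=q\a\b$, and multiplying the equation by $-1$ produces exactly the operator of \eqref{e1q}; as scaling a homogeneous equation by $-1$ does not change its solutions, \eqref{e0q} and \eqref{e1q} are equivalent, so ${}_2\phi_1$ solves \eqref{e1q}.

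To get \eqref{e2q} I would divide \eqref{e1q} by $\g x(1-\e x)$ and apply the partial fraction identity
$$\frac{[c]-(\a\b+\b[a]+\a[b])x}{x(1-\e x)}=\frac{[c]}{x}-\frac{(\a\b+\b[a]+\a[b])-\e[c]}{1-\e x},$$
which is verified at once by clearing denominators and comparing the constant terms and the coefficients of $x$.

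The only genuine obstacle is bookkeeping of non-commutativity: every $\D$ must be pushed to the right using $\D x=qx\D+1$, while the scalars $[a]$, $[b]$, $[c]$, $\a$, $\b$, $\g$, $\e$ (all lying in the coefficient field $\Q(\a,\b,\g,q)$, hence central) are kept apart from the operators. Once the normal ordering is carried out consistently, the rest is routine algebra.
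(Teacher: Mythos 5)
Your proposal is correct and follows essentially the same route as the paper: the decomposition $[\d+a]=\a[\d]+[a]$, the commutation rule $\D x=qx\D+1$ giving $[\d]^2=qx^2\D^2+x\D$, and the identity $[c-1]+q^{-1}\g=[c]$ are exactly the ingredients the paper uses. The only difference is that you write out the normal-ordered products and the partial-fraction step explicitly, which the paper leaves to the reader.
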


\begin{proof}In \eqref{e0q}, substitute 
$$[\d+a]=\frac{1-\a q^\d}{1-q}=\a\frac{1-q^\d}{1-q}+\frac{1-\a}{1-q}=\a [\d]+[a].$$
By Lemma \ref{l7.1}, we have $\D x=qx \D+1$ (operators), hence
$$[\d]^2=x\D x\D=qx^2\D^2+x\D.$$
Then, using $[c-1]+q^{-1}\g=[c]$, we obtain \eqref{e1q}, hence \eqref{e2q}. 
\end{proof}

Now, let $q \in \C$ with $0<|q|<1$.  
Let $a$, $b$, $c \in \C$, $\a=q^a$, $\b=q^b$, $\g=q^c$ and assume  $\g \not\in q^{-\N}$. Then the power series ${}_2\phi_1\left({\a,\b \atop \g}; x\right) \in \C[[x]]$ defines an analytic function on $|x|<1$.  
Note that, in the limit as $q \to 1$, we have 
$\a, \b, \g \to 1$, $[a], [b], [c]  \to a, b, c$, hence ${}_2\phi_1\left({\a,\b \atop \g}; x\right) \to \FF{a,b}{c}{x}$. 
Since $[\d+a]\mapsto D+a$ and $\D \to \pd$, the equation \eqref{e0q} (resp. \eqref{e1q}, \eqref{e2q}) specializes to \eqref{e0} (resp. \eqref{e1}, \eqref{e2}). 

We give a $q$-analogue of \eqref{e3}. 
The function $x^a$ satisfies
\begin{equation}\label{e10}
\D x^a = [a]x^{a-1}.
\end{equation}
A $q$-analogue of the function $(1-x)^a$ is the following. 

\begin{dfn}For $\a=q^a$, define a function by 
$$\phi_\a(x)=\frac{(x;q)_\infty}{(\a x;q)_\infty},$$
where $(x;q)_\infty=\prod_{i=0}^\infty (1-xq^i)$. It converges on $|x|<1$. 
\end{dfn}

Recall that
${}_1F_0\left({a\atop \ };x\right)=(1-x)^{-a}$. 
Similarly, we have the $q$-binomial theorem 
$$ {}_1\phi_0\left({\a\atop \ };x\right)
:=\sum_{n=0}^\infty \frac{(\a;q)_n}{(q;q)_n} x^n
=\phi_\a(x)^{-1}$$
(see for example \cite[(1.3.2)]{gr}). 
Moreover, one sees easily the following. 

\begin{lem}\label{l7.2}We have
\begin{align*}
& \phi_q(x)=1-x, 
\\& \phi_{\a\b}(x)=\phi_\a(\b x)\phi_\b(x) =\phi_\a(x)\phi_\b(\a x), 
\\& \phi_\a'(x)=-[a]\frac{\phi_\a(x)}{1-x}=-[a]\phi_{q^{-1}\a}(qx).
\end{align*}
\end{lem}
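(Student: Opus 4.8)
The plan is to verify each of the three assertions directly from the definition $\phi_\a(x)=(x;q)_\infty/(\a x;q)_\infty$, exploiting the telescoping behaviour of infinite $q$-products. For the first identity, I would simply note that $\phi_q(x)=(x;q)_\infty/(qx;q)_\infty$, and since $(x;q)_\infty=(1-x)(qx;q)_\infty$ by splitting off the $i=0$ factor, the ratio collapses to $1-x$. This is the base case that pins down the normalization and connects $\phi$ to the classical $(1-x)^a$ at $a=1$.

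For the multiplicativity relation $\phi_{\a\b}(x)=\phi_\a(\b x)\phi_\b(x)=\phi_\a(x)\phi_\b(\a x)$, the key step is the product rearrangement
\begin{equation*}
\phi_\a(\b x)\phi_\b(x)=\frac{(\b x;q)_\infty}{(\a\b x;q)_\infty}\cdot\frac{(x;q)_\infty}{(\b x;q)_\infty}=\frac{(x;q)_\infty}{(\a\b x;q)_\infty}=\phi_{\a\b}(x),
\end{equation*}
where the middle factors $(\b x;q)_\infty$ cancel; the second form follows by the symmetry of the roles of $\a$ and $\b$ (equivalently, by replacing the pair $(\a,\b)$ with $(\b,\a)$ and $x$ with $x$, then relabelling). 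One should remark that all products converge for $|x|<1$, $|\a|,|\b|\le 1$ (or as formal power series in $q$), so the rearrangement of factors is legitimate.

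For the derivative formula, I would compute $\D\phi_\a(x)=(\phi_\a(x)-\phi_\a(qx))/((1-q)x)$ and use the two one-step recursions $(x;q)_\infty=(1-x)(qx;q)_\infty$ and $(\a x;q)_\infty=(1-\a x)(\a qx;q)_\infty$, which give
\begin{equation*}
\frac{\phi_\a(x)}{\phi_\a(qx)}=\frac{(x;q)_\infty/(qx;q)_\infty}{(\a x;q)_\infty/(\a qx;q)_\infty}=\frac{1-x}{1-\a x}.
\end{equation*}
Hence $\phi_\a(x)-\phi_\a(qx)=\phi_\a(qx)\bigl(\tfrac{1-x}{1-\a x}-1\bigr)=\phi_\a(qx)\cdot\tfrac{(\a-1)x}{1-\a x}$, and dividing by $(1-q)x$ produces $-[a]\,\phi_\a(qx)/(1-\a x)$. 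It remains to recognize this as $-[a]\phi_\a(x)/(1-x)$ (using the displayed ratio again to rewrite $\phi_\a(qx)/(1-\a x)=\phi_\a(x)/(1-x)$) and as $-[a]\phi_{q^{-1}\a}(qx)$ (since $\phi_{q^{-1}\a}(qx)=(qx;q)_\infty/(\a x;q)_\infty=\phi_\a(qx)/(1-\a x)$, again by splitting one factor). The only mild obstacle is bookkeeping: keeping straight which argument gets the extra $q$ and which product loses its leading factor, but each manipulation is a single application of $(y;q)_\infty=(1-y)(qy;q)_\infty$, so there is no real difficulty. The passage $[a]=(1-\a)/(1-q)$ turns the rational prefactor $(\a-1)x/((1-q)x)$ into $-[a]$ cleanly.
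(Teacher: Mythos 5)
Your proof is correct: each identity follows by exactly the kind of direct manipulation of $(y;q)_\infty=(1-y)(qy;q)_\infty$ and telescoping of the infinite products that the paper has in mind when it states the lemma without proof (``one sees easily''). Your computation of $\phi_\a(x)/\phi_\a(qx)=(1-x)/(1-\a x)$ and the identification of the three equivalent forms of $\phi_\a'(x)$ are all accurate, so there is nothing to add.
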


Our canonical form of the difference equation for ${}_2\phi_1$ is the following. 
\begin{thm}\label{thm5}
Put $\vp(x)=x^c \phi_\e(x)$ where $\e=q\a\b\g^{-1}$. Then ${}_2\phi_1\left({\a,\b \atop \g};x\right)$ is the unique solution of the difference equation 
\begin{equation}\label{e3q}
\left(\D \vp(x) \D+(1-q)[a][b]\frac{\vp(x)}{1-x}\D-[a][b]\frac{\vp(x)}{x(1-x)}\right)y=0 
\end{equation}
such that
$$y(0)=1, \quad (\D y)(0)=\frac{[a][b]}{[c]}.$$
\end{thm}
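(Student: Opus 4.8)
The plan is to mimic exactly the classical derivation in Theorem~\ref{p1}: rewrite the operator $\D\vp(x)\D$ by moving the multiplication operator $\vp(x)$ to the left using the $q$-Leibniz rule of Lemma~\ref{l7.1}, thereby recovering the operator appearing in \eqref{e2q}, and then read off the initial values from the power series expansion. Since the claimed solution is ${}_2\phi_1$, which by the previous proposition is a solution of \eqref{e2q}, it suffices to prove that the difference operator in \eqref{e3q} is, up to multiplication on the left by a nowhere-vanishing function, the same as the operator in \eqref{e2q}.

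Concretely, first I would compute $\D\vp(x)$ as an operator. Writing $\vp(x)=x^c\phi_\e(x)$ and applying Lemma~\ref{l7.1} twice (or once to the product $x^c\phi_\e(x)$), one gets $\D\vp(x)=\vp(qx)\D+\vp'(x)$, and then $\vp'(x)=\D(x^c\phi_\e(x))$ is evaluated by the $q$-product rule together with \eqref{e10} and the formula $\phi_\e'(x)=-[e]\phi_\e(x)/(1-x)$ from Lemma~\ref{l7.2}, where $[e]$ corresponds to $\e=q\a\b\g^{-1}$. This expresses $\vp'(x)/\vp(qx)$ as a rational function in $x$ (a $q$-analogue of $c/x-e/(1-x)$). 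Then $\D\vp(x)\D = \vp(qx)\D^2 + \vp'(x)\D$, so after the extra lower-order term in \eqref{e3q} is added and one divides through by the appropriate shift of $\vp$, the operator becomes $\D^2 + (\text{rational function})\,\D - [a][b]\vp(x)/(\vp(qx)x(1-x))$. The remaining task is the bookkeeping identity showing that this rational coefficient of $\D$ and this coefficient of $y$ match those in \eqref{e2q}; this uses $\phi_\e(x)/\phi_\e(qx)=(1-x)/(1-\e x)$ (immediate from the definition of $\phi_\e$) and the identity $[c-1]+q^{-1}\g=[c]$ and the relation among $[a],[b],[c],[e]$ coming from $\e=q\a\b\g^{-1}$, exactly as in the proof of the proposition preceding Definition~3.

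Finally, the initial values: from the series ${}_2\phi_1\left({\a,\b\atop\g};x\right)=1+\frac{(1-\a)(1-\b)}{(1-\g)(1-q)}x+\cdots$ and $[a][b]/[c]=\frac{(1-\a)(1-\b)}{(1-q)(1-\g)}$, one reads off $y(0)=1$, $(\D y)(0)=[a][b]/[c]$. Uniqueness is the $q$-analogue of the regular-singular argument: the difference equation \eqref{e3q}, once solved for the top-order coefficient, determines every Taylor coefficient recursively from $y(0)$ and $(\D y)(0)$ (one should check the recursion is non-degenerate, i.e. the relevant $[c+n]$ do not vanish, which holds under $\g\notin q^{-\N}$). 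I expect the main obstacle to be purely computational: carefully tracking the extra $\D$-term $(1-q)[a][b]\tfrac{\vp(x)}{1-x}\D$ in \eqref{e3q}, which has no classical counterpart (it vanishes as $q\to1$), and verifying that it is precisely what is needed to absorb the discrepancy between $\vp'(x)/\vp(qx)$ and the naive $q$-analogue of $c/x-e/(1-x)$ so that the coefficient of $\D$ comes out to match \eqref{e2q}.
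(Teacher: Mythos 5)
Your proposal is correct and follows essentially the same route as the paper: the paper likewise computes $\D\vp(x)$ as an operator via Lemma \ref{l7.1}, \eqref{e10} and Lemma \ref{l7.2}, obtaining $\D\vp(x)=\g\vp(x)\tfrac{1-\e x}{1-x}\bigl(\D+\tfrac{[c]}{\g x}-\tfrac{[e]}{\g(1-\e x)}\bigr)$ (note $\g\vp(x)\tfrac{1-\e x}{1-x}=\vp(qx)$, your ``shift of $\vp$''), and then matches \eqref{e3q} against \eqref{e2q} using exactly the identity $[e]-(1-q)[a][b]=\a\b+\b[a]+\a[b]-\e[c]$ that you identify as the role of the extra $\D$-term. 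The only cosmetic differences are that the paper reads off the initial value from the contiguous relation $\D\,{}_2\phi_1\left({\a,\b\atop\g};x\right)=\tfrac{[a][b]}{[c]}\,{}_2\phi_1\left({q\a,q\b\atop q\g};x\right)$ rather than from the first series coefficient, and declares uniqueness evident where you spell out the recursion.
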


\begin{proof}By Lemma \ref{l7.1}, \eqref{e10} and Lemma \ref{l7.2}, we have identities of operators 
\begin{align*}
\D \vp(x)& =(\g x^c\D+[c]x^{c-1})\phi_\e(x)
\\ &=\g x^c(\phi_\e(qx)\D+\phi_\e'(x))+[c]x^{-1}\vp(x)
\\& =\g\vp(x) \left(\frac{1-\e x}{1-x} \D -\frac{[e]}{1-x}+\frac{[c]}{\g x}\right)
 \\&=\g\vp(x) \frac{1-\e x}{1-x}\left(\D+\frac{[c]}{\g x}-\frac{[e]}{\g(1-\e x)} \right). 
\end{align*}
Then we obtain \eqref{e3q} from \eqref{e2q}, using 
\begin{align*}
[e]-(1-q)[a][b]&=\frac{\a\b(1-q)+\b(1-\a)+\a(1-\b)-\e(1-\g)}{1-q}
\\& =\a\b+\b[a]+\a[b]-\e[c]. 
\end{align*}
The initial condition follows by 
\begin{equation*}
\D {}_2\phi_1\left({\a,\b \atop \g};x\right)
=\frac{[a][b]}{[c]} {}_2\phi_1\left({q\a,q\b \atop q\g};x\right), 
\end{equation*}
and the uniqueness is evident. 
\end{proof}

\subsection{Transformation}

The following transformation formula due to Heine \cite[p. 325, XVIII]{heine} is a $q$-analogue 
of Euler's formula \eqref{tle}. 
We give an analogous proof using Theorem \ref{thm5}. 
  
\begin{thm}\label{thm6}
We have
\begin{equation}\label{teq}
\phi_{\a\b\g^{-1}}(x)
{}_2\phi_1\left({\a,\b\atop \g};x\right)
=
 {}_2\phi_1\left({\a^{-1}\g,\b^{-1}\g\atop \g};\a\b\g^{-1}x\right). 
\end{equation}
\end{thm}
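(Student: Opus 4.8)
The plan is to run exactly the same comparison-of-equations argument as in the proofs of \eqref{tle} and \eqref{tlp}, but now using the $q$-canonical form \eqref{e3q} of Theorem \ref{thm5} in place of \eqref{e3}. Write $\g'=\g$, and set $h(x)=\phi_{\a\b\g^{-1}}(x)$, $F_2(x)={}_2\phi_1\left({\a,\b\atop\g};x\right)$, and $F_1(x)={}_2\phi_1\left({\a^{-1}\g,\b^{-1}\g\atop\g};\a\b\g^{-1}x\right)$. First I would write down the difference operator $\sD_2$ for $F_2$ from Theorem \ref{thm5}: with $\e=q\a\b\g^{-1}$ we have $\vp_2(x)=x^c\phi_\e(x)$ and
\[
\sD_2=\D\vp_2(x)\D+(1-q)[a][b]\frac{\vp_2(x)}{1-x}\D-[a][b]\frac{\vp_2(x)}{x(1-x)}.
\]
Next I would obtain $\sD_1$, the operator for $F_1$. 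Its parameters are $\a_1=\a^{-1}\g$, $\b_1=\b^{-1}\g$, $\g_1=\g$, and crucially $\e_1=q\a_1\b_1\g_1^{-1}=q\a^{-1}\b^{-1}\g=q\e^{-1}q^2=\,$(in $a,b,c$ notation) $e_1=1+(c-a)+(c-b)-c=c-a-b+1$, matching the classical case. Since $F_1$ is $G(\l x)$ with $G(x)={}_2\phi_1\left({\a_1,\b_1\atop\g};x\right)$ and $\l=\a\b\g^{-1}$, I would need the $q$-analogue of Lemma \ref{l2.2} for the scaling $x\mapsto\l x$: from $\D_{\l x}=\l^{-1}\D_x$ and $q^{\d}$ commuting appropriately with scaling, the operator $\D g(x)\D+\cdots$ transforms in a controlled way. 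Concretely, scaling by $\l$ sends $x^c\phi_{\e_1}(x)$ to a constant times $x^c\phi_{\e_1}(\l x)$ and I would record that $\phi_{\e_1}(\l x)=\phi_{q\a^{-1}\b^{-1}\g}(\a\b\g^{-1}x)$.

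The heart of the proof is then the $q$-analogue of the comparison lemma: I must show $h(x)\sD_1 h(x)=\sD_2$ up to a nonzero scalar, i.e. find the $q$-version of condition \eqref{e8}. Using Lemma \ref{l7.1} in the form $\D h(x)=h(qx)\D+h'(x)$, one computes $h\,\D f_1\D\,h$ by the same bracket manipulation as in Section \ref{ss-comparison}, except that now factors of $h(qx)$ versus $h(x)$ appear and must be tracked; the first-order term in $\sD_1$ contributes an extra piece. The key input is the product rule $\phi_{\a\b}(x)=\phi_\a(x)\phi_\b(\a x)$ and the logarithmic-derivative formula $\phi_\a'(x)=-[a]\phi_\a(x)/(1-x)$ from Lemma \ref{l7.2}, which let me express $h'(x)$, $h(qx)$ and all the $\vp$'s as monomials in the $\phi$'s and thereby reduce the operator identity to a polynomial identity in $q^a,q^b,q^c$. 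I expect this to come down to verifying an identity like $[e]-(1-q)[a][b]=\a\b+\b[a]+\a[b]-\e[c]$, which already appeared in the proof of Theorem \ref{thm5}, together with the analogous one with $a,b$ replaced by $c-a,c-b$; these are the $q$-analogues of the single line $(a+b-c)c-(c-a)(c-b)=ab$ in the proof of \eqref{tle}. The main obstacle, and the place where care is needed, is precisely bookkeeping the shift operator: in the $q$-setting $h(x)$ and $h(qx)$ are genuinely different, so the clean factorization $h\sD_1 h=\sD_2$ may only hold after also shifting arguments inside $\phi_{\a\b\g^{-1}}$, and one must check the first-derivative term of $\sD_1$ conspires correctly with the $(1-q)[a][b]$ term of $\sD_2$ rather than producing spurious terms.

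Finally I would close the argument by the uniqueness clause of Theorem \ref{thm5}: both $h(x)F_2(x)$ and $F_1(x)$ are holomorphic near $x=0$, both equal $1$ at $x=0$ (since $\phi_{\a\b\g^{-1}}(0)=1$ and both ${}_2\phi_1$'s start with $1$), and a short computation with $\D$ at $0$ — using $\phi'_{\a\b\g^{-1}}(0)=-[a+b-c]$, $(\D F_2)(0)=[a][b]/[c]$, and $(\D F_1)(0)=\l\,[c-a][c-b]/[c]=\a\b\g^{-1}[c-a][c-b]/[c]$ — shows $(\D(hF_2))(0)=(\D F_1)(0)$; the required identity here is the $q$-analogue of $(c-a)(c-b)/c-(a+b-c)=ab/c$. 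Since $h(x)F_2(x)$ solves $\sD_1$ (because $\sD_1(hF_2)=h^{-1}\sD_2 F_2=0$ by the operator identity) and matches the two initial values characterizing $F_1$, uniqueness gives $h(x)F_2(x)=F_1(x)$, which is \eqref{teq}. \qed
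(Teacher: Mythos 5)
Your plan is essentially the paper's own proof: it invokes the canonical form of Theorem \ref{thm5} for both sides, handles the rescaling $x\mapsto\a\b\g^{-1}x$ via a shift/scaling operator, reduces \eqref{teq} to a conjugation identity between the two difference operators, and closes with the uniqueness clause by matching $y(0)$ and $(\D y)(0)$. The bookkeeping difficulty you flag is real and is resolved exactly as you anticipate: the paper's operator identity \eqref{e11} reads $\s^{2-c}\phi_\s(qx)\,\s^\d\,\sD_1\,\s^{-\d}\,\phi_\s(x)=\sD_2$ with $\s=\a\b\g^{-1}$, so the left factor is $h(qx)$ up to the constant $\s^{2-c}$ with the scaling operators sandwiched in, and the first-order term of \eqref{e3q} absorbs the cross terms via $[s](1-\g)+(1-q)[c-a][c-b]\s=(1-q)[a][b]$.
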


Before the proof, we introduce another notation.  
\begin{dfn}
For $\a=q^a$, define an operator $\a^\d=q^{a\d}$ by 
$$\a^\d f(x)=f(\a x).$$ 
\end{dfn}
Then, one easily shows the following. 
\begin{lem}\label{l7.3}
There are identities of operators
\begin{align*}
(\a\b)^\d&=\a^\d\b^\d,\\
\D \a^\d &=\a \a^\d \D, \\
\a^\d g(x) \a^{-\d}&=g(\a x). 
\end{align*}
\end{lem}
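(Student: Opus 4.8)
The plan is to prove each of the three identities by testing both sides on an arbitrary function (or formal power series) $f(x)$ and checking that they produce the same function; since a linear operator on this space is determined by its action on functions, the equality of the resulting functions gives the asserted equality of operators. The only ingredients needed are the defining relation $\a^\d f(x)=f(\a x)$, the definition $\D h(x)=(h(x)-h(qx))/(x-qx)$, and the product rule of Lemma \ref{l7.1}. Before beginning I would fix the reading of the notation: $\a^{-\d}$ is to mean $(\a^{-1})^\d$, the operator $f(x)\mapsto f(\a^{-1}x)$. The first identity applied with $\b=\a^{-1}$ then gives $\a^\d\a^{-\d}=(\a\a^{-1})^\d=\mathrm{id}$ (and likewise the other composition order), so $\a^{-\d}$ is a genuine two-sided inverse of $\a^\d$ and the notation in the third identity is unambiguous.

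For the first identity $(\a\b)^\d=\a^\d\b^\d$, I would compute the right-hand side by composing in order: $\b^\d$ sends $f(x)$ to $g(x):=f(\b x)$, and then $\a^\d$ acts on the \emph{already substituted} function $g$, replacing its argument $x$ by $\a x$ to give $g(\a x)=f(\b\a x)=f(\a\b x)$, which is exactly $(\a\b)^\d f(x)$. The one point to be careful about is precisely this: the outer $\a^\d$ rescales the current function $g$, not the original $f$.

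For the second identity $\D\a^\d=\a\a^\d\D$, I would apply each side to $f$. On the left, $\a^\d f(x)=f(\a x)$, and then $\D$ of this is $(f(\a x)-f(\a q x))/(x-qx)$. On the right, $\D f(x)=(f(x)-f(qx))/(x-qx)$, and applying $\a^\d$ rescales the argument to give $(f(\a x)-f(\a q x))/(\a x-\a q x)$, after which the scalar factor $\a$ out front cancels the $\a$ in this denominator, reproducing the left-hand expression. In substance this records that $\D$ acquires a factor $\a$ under $x\mapsto\a x$, i.e. that $\a^\d\D=\a^{-1}\D\a^\d$; it is the only identity with any arithmetic to it, and even here the computation is a one-line cancellation, so I do not expect a genuine obstacle.

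For the third identity $\a^\d g(x)\a^{-\d}=g(\a x)$, where $g(x)$ denotes the multiplication operator, I would apply the left side to $f$ in three stages: $\a^{-\d}f(x)=f(\a^{-1}x)$; multiplication by $g$ gives $g(x)f(\a^{-1}x)$; and $\a^\d$ substitutes $\a x$ for $x$ throughout this product, yielding $g(\a x)f(\a^{-1}\a x)=g(\a x)f(x)$, which is multiplication by $g(\a x)$ applied to $f$. The only subtlety, worth stating explicitly, is the same convention used in the first identity: the outer substitution acts on the entire current function, including the factor $g(x)$ introduced by the preceding multiplication, which is exactly what converts $g(x)$ into $g(\a x)$. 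With all three computations being short and purely formal, I would simply present them in this order, flagging the composition-order convention as the single thing a reader must keep in mind.
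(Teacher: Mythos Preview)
Your proof is correct; each of the three identities is verified by applying both sides to an arbitrary $f(x)$ and matching the results, exactly as you describe. The paper itself does not give a proof at all---it simply states ``one easily shows the following''---so your argument is in the same spirit (direct verification from the definitions) but spelled out in full.
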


\begin{proof}[Proof of Theorem \ref{thm6}]
Put $s=a+b-c$ and $\s=q^s=\a\b\g^{-1}$. 
By Theorem \ref{thm5}, ${}_2\phi_1\left({\a^{-1}\g,\b^{-1}\g\atop \g};x\right)$ 
and ${}_2\phi_1\left({\a,\b\atop \g};x\right)$ are solutions of the difference operators, respectively, 
\begin{align*}
\sD_1=&\D x^c\phi_{q\s^{-1}}\D+(1-q)[c-a][c-b]x^c\phi_{\s^{-1}}(qx)\D
\\&-[c-a][c-b]x^{c-1}\phi_{\s^{-1}}(qx).
\\ \sD_2 =& \D x^c\phi_{q\s}\D+(1-q)[a][b]x^c\phi_{\s}(qx)\D-[a][b]x^{c-1}\phi_{\s}(qx). 
\end{align*}
The right-hand side of \eqref{teq} is a solution of $\sD_1 \s^{-\d}$. 
We show the identity of operators 
\begin{equation}\label{e11}
\s^{2-c} \phi_\s(qx)\s^\d \sD_1 \s^{-\d} \phi_\s(x) = \sD_2.
\end{equation}
Then it follows that the left-hand side of \eqref{teq} is also a solution of $\sD_1 \s^{-\d}$. Comparing the initial values, which is easy, we obtain the theorem. 

We compute the left-hand side of \eqref{e11}. 
For the first term, using Lemmas \ref{l7.1}, \ref{l7.2}, \ref{l7.3} and \eqref{e10}, we have
\begin{align*}
& \s^{2-c} \phi_\s(qx)\s^\d \D x^c\phi_{q\s^{-1}}(x) \D \s^{-\d} \phi_\s(x) 
\\=&\s^{-c} \phi_\s(qx)\D \s^\d x^c\phi_{q\s^{-1}}(x)\s^{-\d} \D \phi_\s(x)  
\\=&\phi_\s(qx)\D x^c\phi_{q\s^{-1}}(\s x) \D \phi_\s(x) 
\\=&(\D \phi_\s(x) -\phi_\s'(x))x^c\phi_{q\s^{-1}}(\s x)(\phi_\s(qx)\D+\phi_\s'(x))
\\=&\D x^c \phi_\s(x)\phi_{q\s^{-1}}(\s x)\phi_\s(qx) \D
+\D x^c\phi_\s(x)\phi_{q\s^{-1}}(\s x)\phi_\s'(x)
\\& - x^c \phi_\s'(x)\phi_{q\s^{-1}}(\s x)\phi_\s(qx)\D -x^c \phi_\s'(x)^2\phi_{q\s^{-1}}(\s x)
\\=&\D x^c \phi_{q\s}(x)\D-[s]\D x^c \phi_\s(x)
+[s]x^c \phi_{\s}(qx)\D -[s]^2x^c\phi_{q^{-1}\s}(qx)
\\=&\D x^c \phi_{q\s}(x)\D-[s]\{(qx)^c\phi_\s(q x)\D + (x^c\phi_\s(x))'\}
\\& +[s]x^c\phi_{\s}(qx)\D -[s]^2x^c\phi_{q^{-1}\s}(qx)
\\=& \D x^c\phi_{q\s}(x)\D + [s](1-\g)x^c\phi_\s(qx)\D-[s](x^c\phi_\s(x))'-[s]^2x^c\phi_{q^{-1}\s}(qx) 
\\=& \D x^c\phi_{q\s}(x)\D + [s](1-\g)x^c\phi_\s(qx)\D
\\& -[s]^2(1-\g)x^c\phi_{q^{-1}\s}(qx)-[s][c]x^{c-1}\phi_{\s}(x). 
\end{align*}
For the second term, we have similarly
\begin{align*}
& \s^{2-c} \phi_\s(qx)\s^\d x^c \phi_{\s^{-1}}(qx)\D  \s^{-\d} \phi_\s(x) 
\\=&\s^{1-c} \phi_\s(qx)\s^\d x^c \phi_{\s^{-1}}(qx)  \s^{-\d}\D \phi_\s(x) 
\\=&\s x^c\phi_\s(qx)\phi_{\s^{-1}}(q\s x) (\phi_\s(qx) \D+\phi_\s'(x))
\\=&\s x^c \phi_\s(qx)\D-[s]\s x^c\phi_{q^{-1}\s}(qx)). 
\end{align*}
Finally for the last term, we have
\begin{align*}
& \s^{2-c} \phi_\s(qx)\s^\d x^{c-1}\phi_{\s^{-1}}(qx)  \s^{-\d} \phi_\s(x) 
\\ =& \s x^{c-1} \phi_\s(qx)\phi_{\s^{-1}}(q\s x)\phi_\s(x)
 =\s x^{c-1}\phi_\s(x). 
\end{align*}
Then, using 
\begin{align*}
&[s](1-\g)+(1-q)[c-a][c-b]\s=(1-q)[a][b], \\
&\phi_\s(qx)=(1-\s x)\phi_{q^{-1}\s}(qx)=\frac{1-\s x}{1-x}\phi_\s(x), 
\end{align*}
we obtain \eqref{e11}, hence the theorem. 
\end{proof}

\section*{Acknowledgement}
The author would like to thank Ryojun Ito, Hiroyuki Ochiai, Nobuki Takayama and Raimundas Vid\=unas for helpful discussions. 
This work is supported by JSPS Grant-in-Aid for Scientific Research: 18K03234. 

\ \\


\end{document}